\newtheorem{theorem}{Theorem}[section]
\newtheorem{lemma}[theorem]{Lemma}
\theoremstyle{definition}
\newtheorem{definition}[theorem]{Definition}
\theoremstyle{remark}
\newtheorem{corollary}[theorem]{Corollary}
\newtheorem{problem}[theorem]{Question}
\newtheorem{proposition}[theorem]{Proposition}
\numberwithin{equation}{section}
\begin{document}

\title[Deformations of Hyperbolic Cone-structures]{Deformations of Hyperbolic Cone-structures: Study of the collapsing case}


\author{Alexandre Paiva Barreto}
\address{Universidade Federal de S\~{a}o Carlos}
\email{alexandre@dm.ufscar.br}
\thanks{This work is part of the doctoral thesis of the author, made
  at Universit Paul Sabatier under the supervision of Professor Michel
Boileau and supported by CAPES and FAPESP}

\subjclass[2000]{Primary 55m20}

\date{}

\dedicatory{Dedicated to professor Paulo Sabini (1972 - 2007)}

\begin{abstract}
This work is devoted to the study of deformations of hyperbolic cone
structures under the assumption that the lengths of the singularity remain
uniformly bounded over the deformation. Given a sequence $\left(  M_{i}%
,p_{i}\right)  $ of pointed hyperbolic cone-manifolds with topological type
$\left(  M,\Sigma\right)  $, where $M$ is a closed, orientable and irreducible
$3$-manifold and $\Sigma$ an embedded link in $M$. If the sequence $M_{i}$
collapses and assuming that the lengths of the singularity remain uniformly
bounded, we prove that $M$ is either a Seifert fibered or a $Sol$ manifold. We
apply this result to a question stated by Thurston and to the study of
convergent sequences of holonomies.

\end{abstract}

\maketitle


\section{Introduction}

\qquad Fixed a closed, orientable and irreducible $3$-manifold $M$, this
text focus deformations of hyperbolic cone structures on $M$ which are
singular along a fixed embedded link $\Sigma=\Sigma_{1}\sqcup\ldots\sqcup%
\Sigma_{l}$ in $M$. A \textit{hyperbolic cone structure} with topological
type $\left( M,\Sigma\right) $ is a complete intrinsic metric on $M$ (see
section $2$ for the definition) such that every \textit{non-singular point}
(i.e. every point in $M-\Sigma$) has a neighborhood isometric to an open set
of $\mathbb{H}^{3}$, the hyperbolic space of dimension $3$, and that every 
\textit{singular point} (i.e. every point in $\Sigma$) has a neighborhood
isometric to an open neighborhood of a singular point of $\mathbb{H}%
^{3}\left( \alpha\right) $, the space obtained by identifying the sides of a
dihedral of angle $\alpha \in\left( 0,2\pi\right] $ in $\mathbb{H}^{3}$ by a
rotation about the axe of the dihedral. The angles $\alpha$ are called 
\textit{cone angles} and they may vary from one connected component of $%
\Sigma$ to the other. By convention, the complete structure on $M-\Sigma$ is
 considered as a hyperbolic cone structure with topological
type $\left( M,\Sigma\right) $ and cone angles equal to zero.

Unlike hyperbolic structures, which are rigid after Mostow, the hyperbolic
cone structures can be deformed (cf. \cite{HK2}). The difficulty to
understand these deformations lies in the possibility of degenerating the
structure. In other words, the Hausdorff-Gromov limit of the deformation
(see section 2 for the definition) is only an Alexandrov space which may
have dimension strictly smaller than $3$, although its curvature remains
bounded from below by $-1$ (cf. \cite{Koj}). In fact, the works of Kojima,
Hodgson-Kerckhoff and Fuji (see \cite{Koj}, \cite{HK} and \cite{Fuj}) show
that the degeneration of the hyperbolic cone structures occurs if and only
if the singular link of these structures intersects itself over the
deformation.

When $M$ is a hyperbolic manifold and $\Sigma $ is a finite union of simple
closed geodesics of $M$, it is know that $M-\Sigma $ admits a complete
hyperbolic structure (see \cite{Koj}). We are interested in studying the
 following question that was raised by W.Thuston in 80's:

\begin{problem}
\label{conjectura thurston}Let $M$ be a closed and orientable hyperbolic
manifold and suppose the existence of a simple closed geodesic $\Sigma$ in $M
$. Can the hyperbolic structure of $M$ be deformed to the complete hyperbolic
structure on $M-\Sigma$ through a path $M_{\alpha}$ of hyperbolic cone structures
with topological type $\left( M,\Sigma\right) $ and parametrized by the cone
 angles $\alpha \in\left[ 0,2\pi\right] $?
\end{problem}

When the deformation proposed by Thurston exists, it is a consequence of
Thurston's hyperbolic Dehn surgery Theorem that the length of $\Sigma $ must
converge to zero. In particular, we have that the length of the singular
link remains uniformly bounded over the deformation. Motivated by this
fact, we are interested in studying the deformations of hyperbolic cone structures
 with this aditional hypothesis on the length of the singular link.  This
 assumption is automatically verified when the holonomy representations of
 the hyperbolic cone structures are convergent. This happens (cf. \cite{CS}),
 for example, when $\Sigma $ is a small link in $M$. Note that this additional
 hypothesis avoids the undesirable case where the singularity becomes dense in
 the limiting Alexandrov space. 

In this paper, $M_{i}$ will always denote a sequence of hyperbolic
cone-manifolds with topological type $\left( M,\Sigma\right) $. Given points 
$p_{i}\in M_{i}$, suppose that the sequence $\left( M_{i},p_{i}\right) $
converges (in the Hausdorff-Gromov sense) to a pointed Alexandrov space $%
\left( Z,z_{0}\right) $. Our goal is to understand the metric properties of
the Alexandrov space $Z$ and exploit them to obtain further topological
information on the manifold $M$. To do this, we need the important notion of
collapse for a sequence and a continuous deformation of hyperbolic
cone-manifolds.

\begin{definition}
We say that a sequence $M_{i}$ of hyperbolic cone-manifolds with topological
type $\left( M,\Sigma\right) $ collapses if, for every sequence of points $%
p_{i}\in M-\Sigma$, the sequence $r_{inj}^{M_{i}-\Sigma}\left( p_{i}\right) $
consisting of their Riemannian injectivity radii in $M_{i}-\Sigma$ converges
to zero. Otherwise, we say that the sequence $M_{i}$ does not collapse.
\end{definition}

When a convergent sequence of hyperbolic cone-manifolds does not collapse,
the limit Alexandrov space must have dimension 3. In this case, several
geometric techniques are known to study the topological type of $M$. On the
collapsing case, however, most of the geometric information can be lost.
This happens because the dimension of the limit Alexandrov space may be
strictly smaller than 3 (see \ref{effondre a dist borne implica dim menor}).
Therefore, finding conditions that eliminate the possibility of collapse is
very useful.

Denote by $\mathcal{L}_{M_{i}}\left( \Sigma_{j}\right) $ the length of the
connected component $\Sigma_{j}$ of $\Sigma$ in the hyperbolic cone-manifold 
$M_{i}$. Using this notation, the principal result of this paper is the
following one:

\begin{theorem}
\label{teo principal}\label{teo principal introducao}Let $M$ be a closed,
orientable and irreducible $3$-manifold and let $\Sigma =\Sigma _{1}\sqcup
\ldots \sqcup \Sigma _{l}$ be an embedded link in $M$. Suppose the existence
of a sequence $M_{i}$ of hyperbolic cone-manifolds with topological type $%
\left( M,\Sigma \right) $ and having cone angles $\alpha _{ij}\in \left(
0,2\pi \right] $. If%
\begin{equation}
\sup \left\{ \mathcal{L}_{M_{i}}\left( \Sigma _{j}\right) \;;\;i\in \mathbb{N%
}\text{ and }j\in \left\{ 1,\ldots ,l\right\} \right\} <\infty 
\label{controledocomprimento}
\end{equation}%
and the sequence $M_{i}$ collapses, then $M$ is Seifert fibered or a $Sol$
manifold.
\end{theorem}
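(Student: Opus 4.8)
The plan is to reduce the global topological conclusion to an analysis of the local geometry of the collapse. The essential observation is that on the smooth part $M_i-\Sigma$ the metric is genuinely hyperbolic, so the sectional curvature is identically $-1$ and in particular $\left|\sec\right|\equiv 1$ there; along $\Sigma$ the cone angles lie in $\left(0,2\pi\right]$, which contributes nonnegative curvature, so the whole space has curvature $\geq -1$ in the Alexandrov sense (cf. \cite{Koj}). These two-sided bounds away from $\Sigma$ are exactly what is needed to run the machinery of collapse with bounded curvature. First I would fix basepoints and pass to a pointed Hausdorff-Gromov limit $\left(Z,z_{0}\right)$; since the sequence collapses, the reference result \ref{effondre a dist borne implica dim menor} gives $\dim Z\leq 2$. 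The role of the length hypothesis \eqref{controledocomprimento} is to prevent the singular locus from spreading out: bounded length forces each component $\Sigma_{j}$ to sit inside a controlled tube and to map to a lower-dimensional subset of $Z$, so that the singularity does not become dense in the limit and can be incorporated into the fibred structure produced below.

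The heart of the argument is the construction of a local fibration adapted to the collapse. On $M_i-\Sigma$, where $\left|\sec\right|=1$, the collapse means that the injectivity radius tends to zero uniformly; by the Margulis lemma applied in the universal cover, through each point there passes a short essential loop, and the loops shorter than the Margulis constant generate a virtually abelian subgroup of the local holonomy. Integrating the corresponding almost-Killing fields, as in the Cheeger-Fukaya-Gromov theory of collapse, yields on each piece a local free action of $S^{1}$ or $T^{2}$ (an F-structure of positive rank) whose orbits are the collapsing directions. Near a component $\Sigma_{j}$ the bounded-length hypothesis guarantees a Margulis-type singular tube, a solid torus carrying the cone singularity along its core; the short-loop structure extends across this tube with $\Sigma_{j}$ realized as a core fibre. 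I would then check, using uniqueness of the short generators on overlaps, that these local actions are mutually compatible, and glue them into a single positive-rank F-structure on the closed manifold $M$, with $\Sigma$ contained in the fibres.

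With a global fibred structure in hand, the topological conclusion follows from a case analysis on $\dim Z$. If $\dim Z=2$ the generic fibre is a circle and $M$ is Seifert fibred over the two-dimensional base orbifold $Z$. If $\dim Z=1$ the fibre is a torus (or Klein bottle) and $M$ is a torus bundle (or union of twisted $I$-bundles) over the one-orbifold $Z$; such a manifold is Seifert fibred when the gluing monodromy is periodic or reducible and is a $Sol$ manifold precisely when the monodromy is Anosov. If $\dim Z=0$ then $M$ itself collapses to a point and carries one of the collapsing geometries (Euclidean, $Nil$ or $Sol$), again Seifert fibred or $Sol$. At this last step the irreducibility of $M$ is used to discard degenerate configurations --- spherical space-form summands or essential spheres arising from a badly embedded fibration --- so that the F-structure genuinely organizes all of $M$.

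The main obstacle I anticipate is the globalization across the singular locus: producing the local $S^{1}$- or $T^{2}$-actions on $M_i-\Sigma$ is standard collapse theory, but showing that they extend consistently over the singular Margulis tubes and assemble into a genuine topological fibration of the \emph{closed} manifold $M$ requires careful control of the geometry of the cone tubes as the angles vary in $\left(0,2\pi\right]$ and as the length stays bounded. Matching the smooth fibration to the singular core, and ruling out that the fibres degenerate exactly where they meet $\Sigma$, is the delicate technical point on which the whole argument turns.
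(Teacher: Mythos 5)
Your proposal has two genuine gaps, both located exactly where you yourself flag the ``delicate technical point.'' First, the bounded-curvature collapse machinery (Margulis lemma, almost-Killing fields, Cheeger--Fukaya--Gromov F-structures) is not available here. The two-sided bound $\left\vert \sec\right\vert \equiv 1$ holds only on the \emph{incomplete} manifold $M_{i}-\Sigma$; near the singular tubes no two-sided bound survives any smoothing when the cone angles are small, and the holonomy representation of a hyperbolic cone-manifold is in general \emph{indiscrete}, so the Margulis lemma cannot be applied in the universal cover to produce short essential loops with virtually abelian local fundamental group. The paper sidesteps this entirely: it smooths the metric in small tubes around $\Sigma$ (Salgueiro) to obtain closed Riemannian manifolds $N_{i}$ homeomorphic to $M$ with only the \emph{lower} bound $\sec\geq-1$, checks via Lemma (\ref{salgueiro fica proximo}) that the Hausdorff--Gromov limit is unchanged, and then runs Yamaguchi's fibration theorem (\ref{teo fibracao}) and the Shioya--Yamaguchi classification of collapsed $3$-manifolds with a lower curvature bound --- a theory that never needs an upper curvature bound and never needs discreteness.

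Second, even if you could assemble a global positive-rank F-structure on $M$, the conclusion would only be that $M$ is a graph manifold, and an irreducible graph manifold need not be Seifert fibered or $Sol$ (two Seifert pieces glued along an essential torus is a counterexample). The step that closes this gap in the paper is topological, not geometric: since $M-\Sigma$ is hyperbolic, hence atoroidal, Lemma (\ref{Classificacao de Toros}) shows every embedded torus in $M-\Sigma$ is either parallel to a component of $\Sigma$, bounds a solid torus, or bounds a knot exterior inside a ball; Lemma (\ref{troca exterior de no por toro}) then lets one replace knot exteriors by solid tori. This forces every complementary region of the fibred part $\mathcal{N}_{i}$ to be a solid torus, so the circle fibration extends over all of $M$ (dimension $2$), or the interval/circle cases of the Shioya--Yamaguchi tables apply (dimensions $1$ and $0$), yielding exactly the Seifert/$Sol$/lens/prism list. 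Your proposal uses only irreducibility to discard spheres and never invokes atoroidality of $M-\Sigma$, so it cannot rule out a nontrivial JSJ decomposition of $M$. You also omit the initial reduction (via Fujii) to the case where some $R_{i}\left(\Sigma_{1}\right)$ stays bounded and the basepoint can be taken on $\Sigma_{1}$, which is what makes the dichotomy $\Sigma=\Sigma_{0}\sqcup\Sigma_{\infty}$ and the subsequent torus analysis work.
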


As a consequence of the theorem (\ref{teo principal introducao}), we obtain
 the following result related to the Thurston's question
(\ref{conjectura thurston}).

\begin{corollary}
\label{corolario da conjectura}Let $M$ be a closed and orientable hyperbolic 
$3$-manifold and suppose the existence of a finite union of simple closed
geodesics $\Sigma $ in $M$. Let $M_{\alpha }$ be a (angle decreasing)
deformation of this structure along a continuous path of hyperbolic
cone-structures with topological type $\left( M,\Sigma \right) $ and having the same
cone angle $\alpha \in \left( \theta,2\pi \right] \subset \left[ 0,2\pi \right] $
for all components of $\Sigma $. If%
\begin{equation}
\sup \left\{ \mathcal{L}_{M_{\alpha }}\left( \Sigma _{j}\right) \;;\;\alpha
\in \left( \theta,2\pi \right] \text{ and }j\in \left\{ 1,\ldots ,l\right\}
\right\} <\infty \text{,}  \label{controlecomprimento2}
\end{equation}%
then every convergent (in the Hausdorff-Gromov sense) sequence $M_{\alpha
_{i}}$, with $\alpha _{i}$ converging to $\theta$, does not collapses.
\end{corollary}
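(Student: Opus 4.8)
The plan is to argue by contradiction and reduce the statement directly to the main Theorem \ref{teo principal introducao}. Suppose, contrary to the assertion, that there is a sequence $\alpha _{i}\rightarrow \theta $ for which the convergent sequence $M_{\alpha _{i}}$ collapses. First I would verify that this sequence meets every hypothesis of Theorem \ref{teo principal introducao}. The manifold $M$ is closed and orientable by assumption, and since it carries a hyperbolic structure its universal cover is $\mathbb{H}^{3}\cong \mathbb{R}^{3}$; hence $\pi _{2}\left( M\right) =0$ and, by the Sphere Theorem, $M$ is irreducible. The singular locus $\Sigma $ is a finite union of simple closed geodesics, hence an embedded link, so $\left( M,\Sigma \right) $ is an admissible topological type. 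The cone angles satisfy $\alpha _{i}\in \left( \theta ,2\pi \right] \subset \left( 0,2\pi \right] $, and the uniform length control \eqref{controlecomprimento2} restricts along the subsequence $\left\{ \alpha _{i}\right\} $ to the bound \eqref{controledocomprimento}.

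With all hypotheses in place, Theorem \ref{teo principal introducao} applies to the collapsing sequence $M_{\alpha _{i}}$ and yields that $M$ is either a Seifert fibered manifold or a $Sol$ manifold. The conceptual heart of the corollary is then to contradict this using the fact that $M$ is hyperbolic. A closed orientable hyperbolic $3$-manifold has Gromov-hyperbolic fundamental group, so $\pi _{1}\left( M\right) $ contains no subgroup isomorphic to $\mathbb{Z}^{2}$. On the other hand, every closed Seifert fibered $3$-manifold with infinite fundamental group contains a $\mathbb{Z}^{2}$ subgroup (generated by a regular fiber together with a loop in the base), and every closed $Sol$ manifold contains a $\mathbb{Z}^{2}$ subgroup (the lattice of the fiber torus). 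This contradiction shows that no collapsing subsequence with $\alpha _{i}\rightarrow \theta $ can exist, which is precisely the assertion.

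I do not anticipate a serious technical obstacle, as the corollary is essentially a rigidity statement layered on top of Theorem \ref{teo principal introducao}. The only point requiring care is the verification that a hyperbolic $3$-manifold cannot carry a Seifert or $Sol$ geometry; this follows from the mutual exclusivity of the eight Thurston geometries on closed manifolds, equivalently from the algebraic obstruction via $\mathbb{Z}^{2}$ subgroups recorded above. I would state this step explicitly and cite the standard reference, since it is what converts the purely metric conclusion of the main theorem into the desired non-collapsing statement.
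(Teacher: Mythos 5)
Your argument is correct and follows essentially the same route as the paper: apply Theorem \ref{teo principal introducao} to the (hypothetically collapsing) sequence $M_{\alpha_i}$ and contradict the conclusion with the hyperbolicity of $M$. The paper's proof is just a terser version of this; your added verifications (irreducibility of hyperbolic manifolds, the $\mathbb{Z}^2$-subgroup obstruction to Seifert or $Sol$ geometry) are standard facts the paper leaves implicit.
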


If the deformation in the statement of the corollary is supposed to be maximal,
 we remind that $\theta =0$ if and only if $\theta\leq \pi $. This is a consequence of
 Kojima's work in \cite{Koj}.

\section{Metric Geometry}

\qquad Given a metric space $Z$, the metric on $Z$ will always be denoted by
$d_{Z}\left(  \cdot,\cdot\right)  $. The open ball of radius $r>0$ about a
subset $A$ of $Z$ is going to be denoted by%
\[
B_{Z}\left(  A,r\right)  =%
{\textstyle\bigcup\limits_{a\in A}}
\left\{  z\in Z\;;\;d_{Z}\left(  z,a\right)  <r\right\}  \text{.}%
\]
A metric space $Z$ is called a \textit{length space} (and its metric is called
intrinsic) when the distance between every pair of points in $Z$ is given by
the infimum of the lengths of all rectificable curves connecting them. When a
minimizing geodesic between every pair of points exists, we say that $Z$ is
\textit{complete}.

For all $k\in\mathbb{R}$, denote $\mathbb{M}_{k}^{2}$ the complete and simply
connected two dimensional Riemannian manifold of constant sectional curvature
equal to $k$. Given a triple of points $\left(  x\;;\;y,z\right)  $ of $Z$, a
\textit{comparison triangle} for the triple is nothing but a geodesic triangle
$\Delta_{k}\left(  \overline{x},\overline{y},\overline{z}\right)  $ in
$\mathbb{M}_{k}^{2}$ with vertices $\overline{x}$, $\overline{y}$ and
$\overline{z}$ such that%
\[
d_{\mathbb{M}_{k}^{2}}\left(  \overline{x},\overline{y}\right)  =d_{Z}\left(
x,y\right)  \;\text{,}\;\;d_{\mathbb{M}_{k}^{2}}\left(  \overline{y}%
,\overline{z}\right)  =d_{Z}\left(  y,z\right)  \;\;\text{and}%
\;\;d_{\mathbb{M}_{k}^{2}}\left(  \overline{z},\overline{x}\right)
=d_{Z}\left(  z,x\right)  \text{.}%
\]
Note that a comparison triangle always exists when $k\leq0$. The
$k$\textit{-angle} of the triple $\left(  x\;;\;y,z\right)  $ is, by
definition, the angle $\measuredangle_{k}\left(  x\;;\;y,z\right)  $ of a
comparison triangle $\Delta_{k}\left(  \overline{x},\overline{y},\overline
{z}\right)  $ at the vertex $\overline{x}$ (assuming the triangle exists).

\begin{definition}
A finite dimensional (in the Hausdorff sense) length space $Z$ is called an
Alexandrov space of curvature not smaller than $k\in\mathbb{R}$ if every point
has a neighborhood $U$ such that, for all points $x,y,z\in U$, the angles
$\measuredangle_{k}\left(  x\;;\;y,z\right)  $, $\measuredangle_{k}\left(
y\;;\;x,z\right)  $ and $\measuredangle_{k}\left(  z\;;\;x,y\right)  $ are
well defined and satisfy%
\[
\measuredangle_{k}\left(  x\;;\;y,z\right)  +\measuredangle_{k}\left(
y\;;\;x,z\right)  +\measuredangle_{k}\left(  z\;;\;x,y\right)  \leq
2\pi\text{.}%
\]

\end{definition}

We point out that every hyperbolic cone-manifold is an Alexandrov space of
curvature not smaller than $-1$.

Suppose from now on that $Z$ is a $n$ dimensional Alexandrov space of
curvature not smaller than $k\in\mathbb{R}$. Consider $z\in Z$ and $\lambda
\in\left(  0,\pi\right)  $. The point $z$ is said to be $\lambda
$\textit{-strained} if there exists a set $\left\{  \left(  a_{i}%
,b_{i}\right)  \in Z\times Z\;;\;i\in\left\{  1,\ldots,n\right\}  \right\}  $,
called a $\lambda$-strainer at $z$, such that $\measuredangle_{k}\left(
x\;;\;a_{i},b_{i}\right)  >\pi-\lambda$ and%
\[
\max\left\{  \left\vert \measuredangle_{k}\left(  x\;;\;a_{i},a_{j}\right)
-\frac{\pi}{2}\right\vert ,\;\left\vert \measuredangle_{k}\left(
x\;;\;b_{i},b_{j}\right)  -\frac{\pi}{2}\right\vert ,\left\vert \measuredangle
_{k}\left(  x\;;\;a_{i},b_{j}\right)  -\frac{\pi}{2}\right\vert \right\}
<\lambda
\]
for all $i\neq j\in\left\{  1,\ldots,n\right\}  $. The set $R_{\delta}\left(
Z\right)  $ of $\lambda$-strained points of $Z$ is called the \textit{set of
}$\lambda$\textit{-regular points of }$Z$. It is a remarkable fact that
$R_{\delta}\left(  Z\right)  $ is an open and dense subset of $Z$.

Recall now, the notion of (pointed) Hausdorff-Gromov convergence (see
\cite{BBI}):

\begin{definition}
Let $\left(  Z_{i},z_{i}\right)  $ be a sequence of pointed metric spaces. We
say that the sequence $\left(  Z_{i},z_{i}\right)  $ converges in the
(pointed) Hausdorff-Gromov sense to a pointed metric space $\left(
Z,z_{0}\right)  $, if the following holds: For every $r>\varepsilon>0$, there
exist $i_{0}\in\mathbb{N}$ and a sequence of (may be non continuous) maps
$f_{i}:B_{Z_{i}}\left(  z_{i},r\right)  \rightarrow Z$ ($i>i_{0}$) such that

\begin{enumerate}
\item[i.] $f_{i}\left(  z_{i}\right)  =z_{0}$,

\item[ii.] $\sup\left\{  d_{Z^{\prime}}\left(  f_{i}\left(  z_{1}\right)
,f_{i}\left(  z_{2}\right)  \right)  -d_{Z}\left(  z_{1},z_{2}\right)
\;;\;z_{1},z_{2}\in Z\right\}  <\varepsilon$,

\item[iii.] $B_{Z}\left(  z_{0},r-\varepsilon\right)  \subset B_{Z}\left(
f_{i}\left(  B_{Z_{i}}\left(  z_{i},r\right)  \right)  ,\varepsilon\right)  $,

\item[iv.] $f_{i}\left(  B_{Z_{i}}\left(  z_{i},r\right)  \right)  \subset
B_{Z}\left(  z_{0},r+\varepsilon\right)  $.
\end{enumerate}
\end{definition}

Its a fundamental fact that the class of Alexandrov spaces of curvature not
smaller than $k\in\mathbb{R}$ is pre-compact with respect to the notion of
convergence in the Hausdorff-Gromov sense. In particular, every pointed
sequence of hyperbolic cone-manifolds with constant topological type has a
subsequence converging (in the Hausdorff-Gromov sense) to a pointed Alexandrov
space which may have dimension strictly smaller than $3$, although its
curvature remains bounded from below by $-1$.

The Fibration Theorem is an useful tool for the study of sequences of
Alexandrov spaces. The first version of this theorem, due to Fukaya (see
\cite{Fuk}), concerns the Riemannian manifolds with pinched curvature. Later
this theorem was extended by Takao Yamaguchi to Alexandrov Spaces (see
\cite{Yam}, \cite{SY} and \cite{Bel}). For the convenience of the reader, we
present here the precise statement of the Yamaguchi theorem that is going to be
needed in this work (see \cite{Bar}):

\begin{theorem}
[Yamaguchi]\label{teo fibracao}Let $\left(  M_{i},p_{i}\right)  $ be a
sequence of pointed complete Riemannian manifolds of dimension $3$ with
sectional curvature bounded from below by $-1$. Suppose that the sequence
$\left(  M_{i},p_{i}\right)  $ converges in the Hausdorff-Gromov sense to a
pointed complete length space $\left(  Z,z\right)  $ of dimension $2$ (resp.
dimension $1$), then there exists a constant $\lambda>0$ satisfying the
following condition: For every compact domain $Y$ of $Z$ contained in
$R_{\lambda}\left(  Z\right)  $ and for sufficiently large $i_{0}=i_{0}\left(
Y\right)  \in\mathbb{N}$ , there exist

\begin{enumerate}
\item[$\bullet$] a sequence $\tau_{i}>0$ ($i>i_{o}$) converging to zero,

\item[$\bullet$] a sequence $\mathcal{N}_{i}$ ($i>i_{o}$) of compact
$3$-submanifolds of $M_{i}$ (perhaps with boundary)

\item[$\bullet$] a sequence of $\tau_{i}$-approximations $\mathfrak{p}%
:\mathcal{N}_{i}\rightarrow Y$ which induces a structure of locally trivial
fibre bundle on $\mathcal{N}_{i}$. Furthermore, the fibers of this fibration
are circles (resp. spheres, tori).
\end{enumerate}
\end{theorem}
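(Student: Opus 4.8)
The plan is to follow Yamaguchi's strategy (see \cite{Yam}, \cite{SY}), specialized to the three-dimensional collapsing situation. The starting point is the local structure of $Z$ near a regular point. Fixing $y\in Y\subset R_{\lambda}(Z)$ with a $\lambda$-strainer $\{(a_{k},b_{k})\}_{k=1}^{n}$, where $n=\dim Z\in\{1,2\}$, the standard strainer estimates of Alexandrov geometry show that, once $\lambda$ is smaller than a universal constant depending only on $n$, the map $\varphi=(d_{Z}(\cdot,a_{1}),\ldots,d_{Z}(\cdot,a_{n}))$ is a bi-Lipschitz homeomorphism of a neighborhood of $y$ onto an open set of $\mathbb{R}^{n}$. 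This universal constant is the value of $\lambda$ the theorem produces, and by compactness $Y$ is covered by finitely many such charts. I would then transport this picture to the $M_{i}$: using the Hausdorff--Gromov approximations, choose $a_{k}^{i}\in M_{i}$ converging to $a_{k}$ and set $f_{k}^{i}=d_{M_{i}}(\cdot,a_{k}^{i})$. The comparison inequalities coming from the curvature bound $-1$ guarantee that over the region of $M_{i}$ lying above $Y$ the family $\{(a_{k}^{i},b_{k}^{i})\}$ is an \emph{almost strainer}, so that the gradients of the $f_{k}^{i}$ are almost orthonormal there.

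The core of the argument, and where I expect the main difficulty to lie, is the regularization step. The functions $f_{k}^{i}$ are only Lipschitz, so $\Phi_{i}=(f_{1}^{i},\ldots,f_{n}^{i})$ is not a submersion a priori. Following the averaging/smoothing technique for distance functions (convolution along the gradient flow, as in \cite{Yam}, \cite{Bel}), I would replace each $f_{k}^{i}$ by a smooth $\tilde f_{k}^{i}$ that is $C^{0}$-close to it and whose gradient remains almost orthonormal over the compact preimage $\mathcal{N}_{i}$ of $Y$. Almost-orthonormality forces $d\tilde\Phi_{i}$ to be surjective, so $\tilde\Phi_{i}=(\tilde f_{1}^{i},\ldots,\tilde f_{n}^{i})\colon\mathcal{N}_{i}\to Y$ is a genuine smooth submersion, and by construction a $\tau_{i}$-approximation with $\tau_{i}\to0$. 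Since $Y$ is compact and $\mathcal{N}_{i}=\tilde\Phi_{i}^{-1}(Y)$ is compact, the map is proper, and Ehresmann's theorem upgrades it to a locally trivial fibre bundle over $Y$, whose boundary $\partial\mathcal{N}_{i}$ is the union of the fibres over $\partial Y$. The delicate points are the uniformity of the smoothing estimates as $i\to\infty$ and the verification that the smoothed map stays a Hausdorff--Gromov approximation; these constitute the quantitative heart inherited from \cite{Yam} and \cite{SY}.

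It remains to identify the fibre $F_{i}$, a closed manifold of dimension $3-n$. When $n=2$ the fibre is a closed $1$-manifold, hence a circle, connectedness being arranged over each connected chart. When $n=1$ the fibre is a closed surface, and here the curvature bound together with the collapse is decisive: the whole of $F_{i}$ is mapped into an arbitrarily small ball of $Z$, so it collapses to a point while carrying, after the rescaling implicit in the collapse, metrics of curvature bounded below. For such a surface the Gauss--Bonnet identity $\int_{F_{i}}K\,dA=2\pi\chi(F_{i})$ combined with $K\geq-1$ gives $\mathrm{Area}(F_{i})\geq 2\pi\bigl(-\chi(F_{i})\bigr)$, so a surface of negative Euler characteristic cannot have area tending to zero; hence $\chi(F_{i})\geq0$. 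Since the $M_{i}$ occurring here are orientable, the fibre is an orientable surface of nonnegative Euler characteristic, that is, $S^{2}$ or $T^{2}$, as claimed.

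Finally I would assemble the local fibrations into a single bundle over $Y$ by the usual partition-of-unity/center-of-mass gluing, checking that the transition maps respect the fibre structure, and record that the same $\lambda$ serves for all of $Y$ since it depends only on $n$. The principal obstacle throughout remains the regularization of the second step: producing, uniformly in $i$, a smooth submersion out of the Lipschitz distance coordinates while preserving the approximation property. The surface-collapse classification used in the third step, and the bi-Lipschitz coordinate theorem of the first step, are by contrast standard inputs, so the proof reduces to careful bookkeeping around this one analytic core.
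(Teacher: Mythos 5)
This statement is not proved in the paper: it is quoted as a known result, with the proof delegated to the cited sources (\cite{Yam}, \cite{SY}, \cite{Bel} and the author's thesis \cite{Bar}). So there is no in-paper argument to compare yours against; what can be said is that your outline is a faithful summary of the strategy actually used in those references --- strainer coordinates on the regular part of $Z$, lifted distance functions forming almost-strainers in $M_{i}$, smoothing to obtain a proper submersion, Ehresmann, and then identification of the low-dimensional fibre. You also correctly locate where the real work is: the uniform regularization of the Lipschitz distance coordinates into a submersion that remains a Hausdorff--Gromov approximation. Since you defer exactly that step back to \cite{Yam} and \cite{SY}, the proposal is an accurate roadmap rather than a proof, which is an appropriate level of detail for a theorem the paper itself only imports.

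Two points deserve a caveat. First, in the Gauss--Bonnet step for $n=1$ you apply $K\geq-1$ to the \emph{intrinsic} curvature of the fibre $F_{i}$, but the ambient sectional curvature bound controls the induced curvature of $F_{i}$ only through the Gauss equation, hence only after one knows the second fundamental form of the fibres is small; this almost-totally-geodesic property is part of Yamaguchi's construction of the fibration, not an automatic consequence of collapse, so it should be invoked explicitly before Gauss--Bonnet. Second, the constant $\lambda$ in the statement is claimed to depend only on $n$, but in the form used here it must also be small enough that the strainer machinery and the gluing of local fibrations are compatible over all of $R_{\lambda}(Z)$; this is harmless but worth stating, since the paper's later arguments take $\lambda$ as given by the theorem and build the compact domain $Y$ inside $R_{\lambda}(Z)$ accordingly.
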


\section{Sequences of Hyperbolic cone-manifolds}

\qquad Recall that $M$ denotes a closed, orientable and irreducible
differential manifold of dimension $3$ and that $\Sigma=\Sigma_{1}\sqcup
\ldots\sqcup\Sigma_{l}$ denotes an embedded link in $M$. A sequence of
hyperbolic cone-manifolds with topological type $\left(  M,\Sigma\right)  $
will always be denoted by $M_{i}$.

Given a sequence $M_{i}$ as above, fix indices $i\in\mathbb{N}$ and
$j\in\left\{  1,\ldots,l\right\}  $. For sufficiently small radius $R>0$, the
metric neighborhood%
\[
B_{M_{i}}\left(  \Sigma_{j},R\right)  =\left\{  x\in M_{i}\;;\;d_{M_{i}%
}\left(  x,\Sigma_{j}\right)  <R\right\}
\]
of $\Sigma$ is a solid torus embedded in $M_{i}$. The supremum of the radius
$R>0$ satisfying the above property will be called \textit{normal injectivity
radius of }$\Sigma_{j}$\textit{\ in }$M_{i}$ and it is going to be denoted by
$R_{i}\left(  \Sigma_{j}\right)  $. Analogously we can define $R_{i}\left(
\Sigma\right)  $, the \textit{normal injectivity radius of }$\Sigma$. It is a
remarkable fact (see \cite{Fuj} and \cite{HK}) that the existence of a uniform
lower bound for $R_{i}\left(  \Sigma\right)  $ ensures the existence of a
sequence of points $p_{i_{k}}\in M$ such that the sequence $\left(  M_{i_{k}%
},p_{i_{k}}\right)  $ converges in the Hausdorff-Gromov sense to a pointed
hyperbolic cone-manifold $\left(  M_{\infty},p_{\infty}\right)  $ with
topological type $\left(  M,\Sigma\right)  $. Moreover, $M_{\infty}$ must be
compact provides that cone angles of $M_{i_{k}}$ are uniformly bounded from below.

The purpose of this section is to prove the Theorem (\ref{teo principal}).
This section is divided into two parts. The first part contains some
preliminary results whereas the remaining one presents the proof of the theorem.

Let us point out that, throughout the rest of the paper, the term "component"
is going to stand for "connected component"

\subsection{Preliminary results}

Let us begin with three elementary lemmas which will be important for the
proof of Theorem (\ref{teo principal}).

\begin{lemma}
\label{Classificacao de Toros}Suppose that $M-\Sigma$ is hyperbolic and let
$T$ be a two dimensional torus embedded in $M-\Sigma$. Then $T$ separates $M
$. Moreover, one and only one of the following statements holds:

\begin{enumerate}
\item[i.] $T$ is parallel to a component of $\Sigma$ (hence it bounds a solid
torus in $M$),

\item[ii.] $T$ is not parallel to a component of $\Sigma$ and it bounds a
solid torus in $M-\Sigma$,

\item[iii.] $T$ is not parallel to a component of $\Sigma$ and it is contained
in a ball $B$ of $M-\Sigma$. Furthermore, $T$ bounds a region in $B$ which is
homeomorphic to the exterior of a knot in $S^{3}$.
\end{enumerate}
\end{lemma}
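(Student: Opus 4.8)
The plan is to work inside the open orientable $3$-manifold $N:=M-\Sigma$ and to split the argument according to whether $T$ is incompressible or compressible in $N$. Throughout I would use two classical consequences of the hypothesis that $N$ is hyperbolic: first, $N$ is irreducible (it is aspherical, hence $\pi_{2}(N)=0$, and orientable), so every embedded $2$-sphere in $N$ bounds a ball; second, $N$ is atoroidal in the strong sense that every incompressible torus in $N$ is boundary-parallel, since a finite-volume complete hyperbolic $3$-manifold contains no non-peripheral $\mathbb{Z}^{2}$ subgroup of $\pi_{1}$. I would also use that the ends of $N$ are cusps in bijection with the components of $\Sigma$, so that a boundary-parallel torus is parallel to exactly one $\partial N(\Sigma_{j})$. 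The assertion that $T$ separates $M$ I would not prove directly; instead it will drop out at the end, once each alternative is seen to exhibit $T$ as the boundary of a compact region of $M$, whence $[T]=0$ in $H_{2}(M;\mathbb{Z}/2)$ and $T$ separates.

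Suppose first that $T$ is incompressible in $N$. By atoroidality $T$ is boundary-parallel, hence parallel to $\partial N(\Sigma_{j})$ for a unique index $j$; this is alternative (i). The product region between $T$ and $\partial N(\Sigma_{j})$, glued to the solid torus $N(\Sigma_{j})$, is a solid torus in $M$ with boundary $T$ and containing $\Sigma_{j}$, so $T$ bounds.

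Suppose now that $T$ is compressible, with compressing disk $D\subset N$ whose boundary is essential on $T$. Surgering $T$ along $D$ produces a $2$-sphere $S\subset N$, which bounds a ball $B$ by irreducibility. A model neighbourhood of $T\cup D$ is a collar $T\times I$ with a $2$-handle attached along $\partial D$; this is a solid torus with an open ball removed, with boundary components $T$ and $S$. I would then split into two subcases according to whether or not $T$ lies in a ball of $N$. If $T$ is \emph{not} contained in any ball, then $B$ must lie on the side of $S$ opposite the surgered collar, and filling the missing cavity with $B$ exhibits an embedded solid torus $V\subset N$ with $\partial V=T$; this is alternative (ii). Here $T$ cannot be parallel to $\Sigma$, since a torus parallel to a cusp is incompressible, so this genuinely excludes (i). If instead $T$ \emph{is} contained in a ball $B_{0}$ of $N$, I would cap $\partial B_{0}$ off with a ball to regard $T$ as an embedded torus in $S^{3}$; by the classical classification of tori in $S^{3}$, $T$ bounds a solid torus on one side and the exterior of a knot $K$ on the other. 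The compressing disk identifies $\partial D$ as the meridian bounding in the \emph{solid-torus} side, so the complementary region, which is the one disjoint from the added ball and thus contained in $B_{0}$, is the exterior of $K$; this is alternative (iii).

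Finally, in each of the three alternatives $T$ is the boundary of a compact region of $M$, so $[T]=0$ in $H_{2}(M;\mathbb{Z}/2)$ and $T$ separates $M$, as required. The alternatives are mutually exclusive: (i) is precisely the incompressible case, while (ii) and (iii) are separated by whether $T$ is contained in a ball of $N$. The step I expect to be the real work is the reconstruction in the compressible case: showing that ``not contained in a ball'' forces $B$ to sit on the far side of the compression sphere (so that $T$ bounds an honest solid torus), and, in the swallowed subcase, correctly identifying which side of $T$ in $S^{3}$ is the knot exterior and checking that it lands inside $B_{0}$. Verifying that (ii) and (iii) do not overlap, i.e. that the solid torus of (ii) cannot secretly be contained in a ball, is the delicate point that makes the trichotomy both exhaustive and exclusive.
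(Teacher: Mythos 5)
Your proposal is correct and follows essentially the same route as the paper: hyperbolicity of $M-\Sigma$ forces any non-peripheral torus to be compressible, the compression produces a sphere that bounds a ball by irreducibility, and the trichotomy then falls out of where that ball sits relative to the surgered region (the paper phrases this sub-case split by which side of $T$ the compressing disk lies on, which is equivalent to your "contained in a ball or not" dichotomy). The paper is, if anything, terser than you are --- it also does not address the potential overlap of (ii) and (iii) that you flag at the end.
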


\begin{proof}
Suppose that $T$ is not parallel to a connected component of $\Sigma$. Since
$M-\Sigma$ is hyperbolic, $T$ is compressible in $M-\Sigma$ and hence it
splits $M$ into two components, say $U$ and $V$. Denote by $\Sigma_{U}$ and by
$\Sigma_{V}$ the subsets of $\Sigma$ contained respectively in $U$ and in $V$.
Without loss of generality suppose $\Sigma_{U}\neq\emptyset$.

Let $D$ be a compression disk for $T$. Since $M-\Sigma$ is irreducible and
$\Sigma_{U}\neq\emptyset$, the sphere obtained by the compression of $T$ along
$D$ bounds a ball $B\subset M-\Sigma$ in the opposite side of $\Sigma_{U}$. In
particular, this implies that $\Sigma_{V}=\emptyset$, since $\Sigma_{U}$ and
$\Sigma_{V}$ lie in different sides of $S$, by construction. Thus $V$ must
satisfy one of the assertions (ii) or (iii) according to whether $D\subset V$
or $D\subset U$.\hfill
\end{proof}

\begin{lemma}
\label{troca exterior de no por toro}Given a closed ball $B\subset M-\Sigma$
and an embedded two dimensional torus $T\subset B$, let $U$ and $W$ be the
compact three manifolds (with toral boundary) obtained by surgery of $M$ along
$T$ (suppose $W\subset B$). If $W$ is homeomorphic to the exterior of a knot
in $S^{3}$, then we can replace $W$ by a solid torus $V$ without changing the
topological type of $M$.
\end{lemma}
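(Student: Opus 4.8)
The plan is to localize the entire question inside the ball $B$ and to recognize the piece complementary to the knot exterior as a solid torus with a ball removed. First I would embed the ball $B$ into $S^{3}$ as a tame ball; by the Schoenflies theorem its complement $D:=S^{3}\setminus\mathrm{int}\,B$ is again a ball. Writing $W\cong S^{3}\setminus\mathrm{int}\,N(K)$ for the given knot $K$, the complementary solid torus $N(K)=S^{3}\setminus\mathrm{int}\,W$ is then cut by the sphere $\partial B$ into the two pieces $U_{0}:=B\setminus\mathrm{int}\,W$ and $D$, glued along $\partial B$. In other words $N(K)=U_{0}\cup_{\partial B}D$, so that $U_{0}$ is nothing but the solid torus $N(K)$ with an open ball removed from its interior, and $\partial U_{0}=T\sqcup\partial B$.

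Next I would perform the replacement. Let $V$ be an (abstract) solid torus and choose a gluing homeomorphism $\psi\colon\partial V\to T$ realizing the genus-$1$ Heegaard splitting of the sphere, i.e. so that $N(K)\cup_{\psi}V\cong S^{3}$; such a $\psi$ exists because any two solid tori can be glued along their boundary to produce $S^{3}$ (send the meridian of $V$ to a curve meeting the meridian of $N(K)$ exactly once). Since the ball $D$ sits in the interior of $N(K)$ away from $T$, we obtain
\[
U_{0}\cup_{\psi}V=\bigl(N(K)\cup_{\psi}V\bigr)\setminus\mathrm{int}\,D\cong S^{3}\setminus\mathrm{int}\,D,
\]
which is a ball $B'$ with $\partial B'=\partial B$.

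Finally I would reassemble $M$. Keeping $U=U_{0}\cup_{\partial B}(M\setminus\mathrm{int}\,B)$ fixed and gluing $V$ to $U$ along $T$ by $\psi$ produces
\[
U\cup_{\psi}V=(M\setminus\mathrm{int}\,B)\cup_{\partial B}(U_{0}\cup_{\psi}V)=(M\setminus\mathrm{int}\,B)\cup_{\partial B}B'.
\]
Thus $U\cup_{\psi}V$ is obtained from $M\setminus\mathrm{int}\,B$ by capping the boundary sphere $\partial B$ with a ball, exactly as $M$ itself is. By Alexander's theorem every orientation-preserving homeomorphism of $S^{2}$ extends over the ball, so this capping is unique up to homeomorphism; hence $U\cup_{\psi}V\cong M$, and $V$ is the desired replacement of $W$.

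I expect the main obstacle to be the careful bookkeeping of the gluing maps along $T$ and $\partial B$ — in particular verifying that the removed ball $D$ really lies in the interior of $N(K)\cup_{\psi}V$ disjointly from $V$, and that the slope $\psi$ can be chosen independently of the knot type of $K$ so that $N(K)\cup_{\psi}V\cong S^{3}$. Everything else is a matter of assembling classical facts (the knot exterior has solid-torus complement, the genus-$1$ Heegaard splitting of $S^{3}$, and uniqueness of ball-filling); note that irreducibility of $M$ is not needed here, only that $B$ is a ball.
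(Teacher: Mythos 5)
Your proof is correct and follows essentially the same route as the paper's: cap off $\partial B$ with a ball so that everything sits in $S^{3}$, recognize the complement of $W$ there as the solid torus (your $N(K)$, the paper's $V'$), trade $W$ for a solid torus glued so as to reconstitute $S^{3}$, and observe that deleting the capping ball leaves a ball that can be reglued to $M-\mathrm{int}\,B$ without changing $M$. The two arguments differ only in notation and in the order in which the gluings are performed.
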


\begin{proof}
Let $B^{\prime}$ be a closed ball and $h:\partial B\rightarrow\partial
B^{\prime}$ a diffeomorphism such that $B\cup_{h}B^{\prime}$, the manifold
obtained by gluing $B$ and $B^{\prime}$ through $h$, is diffeomorphic to
$S^{3}$. Note that the manifold $V^{\prime}=\left(  U\cap B\right)  \cup
_{h}B^{\prime}$ is a closed solid torus. Let $V$ be a closed solid torus and
$g:\partial V\rightarrow\partial V^{\prime}$ a diffeomorphism such that
$V\cup_{g}V^{\prime}$, the manifold obtained by gluing $V$ and $V^{\prime}$
through $g$, is diffeomorphic to $S^{3}$. Thus the compact three manifold
$\mathcal{B}=\left(  V^{\prime}\cup_{g}V\right)  -int\left(  B^{\prime
}\right)  =\left(  U\cap B\right)  \cup_{g}V$ is diffeomorphic to a closed ball.

It is a standard result of low dimensional topology that the manifold%
\[
\left(  M-int\left(  B\right)  \right)  \cup_{Id}\mathcal{B}=\left(
M-W\right)  \cup_{g}V
\]
obtained by using $Id:\partial B\rightarrow\partial B$ as gluing map, is
diffeomorphic to $M$.\hfill
\end{proof}

\begin{lemma}
\label{salgueiro fica proximo}Consider $\varepsilon\in\left(  0,1\right)  $
and let $\mathcal{M}$ be a hyperbolic cone-manifold with topological type
$\left(  M,\Sigma\right)  $ together with a metric neighborhood $V$
of $\Sigma$ whose components are homeomorphic to solid tori. Denote by $g$ the
hyperbolic metric on $\mathcal{M}-\Sigma$ and let $h$ be a Riemannian metric
on $M$ such that:

\begin{enumerate}
\item[i.] $h$ coincides with $g$ on $\mathcal{M}-V$,

\item[ii.] $h$ is a $\varepsilon$-perturbation of $g$ on $M-\Sigma$, i.e.%
\begin{equation}
\left\vert \left\Vert v\right\Vert _{g}-\left\Vert v\right\Vert _{h}%
\right\vert <\varepsilon\text{,} \label{norma proxima}%
\end{equation}
for every $x\in\mathcal{M}-\Sigma$ and $v\in K_{x}=\left\{  u\in T_{x}\left(
\mathcal{M}-\Sigma\right)  \;;\;\left\Vert u\right\Vert _{g}\leq2\right\}  $.
\end{enumerate}

\noindent Then the Hausdorff-Gromov distance between $\mathcal{M}$ and the
Riemannian manifold $\mathcal{N}=\left(  M,h\right)  $ is smaller than
$4\varepsilon D$, where $D=1+diam_{\mathcal{M}}\left(  M\right)  $.
\end{lemma}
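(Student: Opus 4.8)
The plan is to construct an explicit Hausdorff-Gromov approximation between $\mathcal{M}$ and $\mathcal{N}=(M,h)$ by exploiting the fact that both metrics live on the same underlying manifold $M$, so the identity map $\mathrm{Id}\colon M\to M$ is the natural candidate to serve (up to bounded error) as the approximation map. The key point is to control how the two intrinsic distance functions $d_g$ and $d_h$ differ, using hypothesis (ii) that bounds the pointwise difference of the $g$- and $h$-norms on vectors of $g$-length at most $2$. Since the Hausdorff-Gromov distance between two metrics on the same space is bounded above by the supremum over pairs $x,y$ of $\tfrac12|d_g(x,y)-d_h(x,y)|$, the whole problem reduces to estimating this distance distortion.

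The main step is therefore the distance comparison. I would fix $x,y\in M$ and take a $d_g$-minimizing path $\gamma$ from $x$ to $y$; its $g$-length is $d_g(x,y)\le D-1=\mathrm{diam}_{\mathcal{M}}(M)$. The subtlety is that hypothesis (ii) only controls norms of vectors with $\|u\|_g\le 2$, so I cannot directly integrate the norm inequality along an arbitrary velocity field. The remedy is to reparametrize $\gamma$ by $g$-arclength, so that its velocity is a $g$-unit vector, which lies safely inside the set $K_x$ of vectors of $g$-norm at most $2$; then (\ref{norma proxima}) gives $\bigl|\,\|\dot\gamma\|_g-\|\dot\gamma\|_h\,\bigr|<\varepsilon$ along the curve. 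Integrating over the $g$-length of $\gamma$ yields
\[
\mathcal{L}_h(\gamma)\le \mathcal{L}_g(\gamma)+\varepsilon\,\mathcal{L}_g(\gamma)=(1+\varepsilon)\,d_g(x,y),
\]
and since $d_h(x,y)\le \mathcal{L}_h(\gamma)$ we get $d_h(x,y)\le d_g(x,y)+\varepsilon\,d_g(x,y)$. Running the symmetric argument with a $d_h$-minimizing curve reparametrized appropriately gives the reverse bound, and combining the two produces $|d_g(x,y)-d_h(x,y)|\le \varepsilon\,\max\{d_g(x,y),d_h(x,y)\}$.

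To finish, I would convert this relative distortion into an absolute one using the diameter bound. Because $d_g(x,y)\le D-1<D$, and $\varepsilon<1$, the relative estimate gives $d_h(x,y)\le (1+\varepsilon)D$ and hence $\max\{d_g,d_h\}$ is bounded by a fixed multiple of $D$; feeding this back yields an absolute bound of the form $|d_g(x,y)-d_h(x,y)|\le C\varepsilon D$ for all $x,y$, with an explicit constant $C$. Tracking the constants carefully (the factors coming from the two directions of the comparison and from the standard inequality $d_{GH}\le \tfrac12\sup|d_g-d_h|$) gives the asserted bound $4\varepsilon D$.

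The step I expect to be the genuine obstacle — rather than a formality — is the reparametrization argument ensuring that the velocity vectors to which I apply hypothesis (ii) really stay within the prescribed set $K_x$. Two cautions arise: first, a $d_g$-minimizing curve need not avoid the singular locus $\Sigma$, where $g$ is only a cone metric and tangent vectors are not defined, so I must argue that minimizing geodesics meet $\Sigma$ in a negligible set (or perturb them slightly off $\Sigma$ at a cost absorbed into $\varepsilon D$); second, when I run the reverse comparison I have an $h$-unit velocity and need it to have $g$-norm at most $2$ so that (\ref{norma proxima}) still applies, which follows from $\varepsilon<1$ but must be checked. Once these membership-in-$K_x$ issues are settled, the rest is the routine integration and bookkeeping sketched above.
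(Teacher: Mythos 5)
Your proposal is correct and follows essentially the same route as the paper's proof: the identity map is used as the approximation, curves are taken arc-length parametrized and pushed off $\Sigma$ (the paper invokes the codimension-$2$ argument to get an almost-minimizing curve with interior in $\mathcal{M}-\Sigma$ of length $<d_{\mathcal{M}}(x,y)+\varepsilon/2$), the norm inequality is integrated along them, and the reverse direction is handled by checking that $h$-unit vectors lie in $K_x$. Both of the ``cautions'' you flag are exactly the points the paper addresses, so nothing essential is missing.
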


\begin{proof}
To prove the lemma, it suffices to check that the identity map
$Id:\mathcal{M\rightarrow N}$ is a $2\varepsilon D$-isometry. Fix $x,y\in M$.
Since $\Sigma$ has codimension $2$, there exists a differentiable curve
$\alpha:\left[  0,L\right]  \rightarrow\mathcal{M}$ (parametrized by arc
length in $\mathcal{M}$) such that $\alpha\left(  \left(  0,L\right)  \right)
\subset\mathcal{M}-\Sigma$ and $L<d_{\mathcal{M}}\left(  x,y\right)
+\frac{\varepsilon}{2}$. Then%
\[%
\begin{array}
[c]{ll}%
d_{\mathcal{N}}\left(  x,y\right)  -d_{\mathcal{M}}\left(  x,y\right)  &
\displaystyle\leq\mathcal{L}_{\mathcal{N}}\left(  \alpha\right)
-\mathcal{L}_{\mathcal{M}}\left(  \alpha\right)  +\frac{\varepsilon}{2}\\
& \\
& \displaystyle\leq%
{\displaystyle\int_{0}^{L}}
\left\vert \left\Vert \alpha^{\prime}\left(  t\right)  \right\Vert
_{g}-\left\Vert \alpha^{\prime}\left(  t\right)  \right\Vert _{h}\right\vert
\,dt\,+\frac{\varepsilon}{2}\\
& \\
& \displaystyle<\varepsilon.d_{\mathcal{M}}\left(  x,y\right)  +\frac
{\varepsilon^{2}}{2}+\frac{\varepsilon}{2}\leq\varepsilon D
\end{array}
\]
and, in particular, $diam_{\mathcal{N}}\left(  M\right)  \leq2D$.

Similarly, there exists a differentiable curve $\beta:\left[  0,L^{\prime
}\right]  \rightarrow\mathcal{N}$ (parametrized by arc length in $\mathcal{N}%
$) such that $\beta\left(  \left(  0,L^{\prime}\right)  \right)
\subset\mathcal{N}-\Sigma$ and $L^{\prime}<d_{\mathcal{N}}\left(  x,y\right)
+\frac{\varepsilon}{2}$. Inequality (\ref{norma proxima}) implies that
$\left\{  u\in T_{z}\left(  \mathcal{M}-\Sigma\right)  \;;\;h_{z}\left(
u,u\right)  =1\right\}  \subset K_{z}$, for all $z\in\mathcal{N}-\Sigma$. So%
\[
\left\vert \left\Vert \beta^{\prime}\left(  t\right)  \right\Vert
_{g}-\left\Vert \beta^{\prime}\left(  t\right)  \right\Vert _{h}\right\vert
<\varepsilon\text{,}%
\]
for all $t\in\left[  0,L^{\prime}\right]  $. Analogously to the preceding case
it then follows that $d_{\mathcal{M}}\left(  x,y\right)  -d_{\mathcal{N}%
}\left(  x,y\right)  \leq2\varepsilon D$.\hfill
\end{proof}

\subsection{Proof of the Theorem (\ref{teo principal})}

\qquad The purpose of this section is to study a collapsing sequence $M_{i}$.
According to \cite[theorem 1]{Fuj}, the sequence $M_{i}$ cannot collapse when
$\lim\limits_{i\rightarrow\infty}R_{i}\left(  \Sigma_{j}\right)  =\infty$, for
all components $\Sigma_{j}$ of $\Sigma$. Given $p\in\Sigma_{1}$, we can assume
without loss of generality that $\sup\left\{  R_{i}\left(  \Sigma_{1}\right)
\;;\;i\in\mathbb{N}\right\}  <\infty$ and that the sequence $\left(
M_{i},p\right)  $ converges in the Hausdorff-Gromov sense to a pointed
Alexandrov space $\left(  Z,z_{0}\right)  $. We are interested in the case
where the length of the singularity remains uniformly bounded, i.e. where%
\[
\sup\left\{  \mathcal{L}_{M_{i}}\left(  \Sigma_{j}\right)  \;;\;i\in
\mathbb{N}\;\text{,}\;j\in\left\{  1,\ldots,l\right\}  \right\}
<\infty\text{.}%
\]
Again an Ascoli-type argument implies (passing to a subsequence if necessary)
that each component $\Sigma_{j}$ of $\Sigma$ satisfies one, and only one, of
the following statements:

\begin{enumerate}
\item $\sup\left\{  d_{M_{i}}\left(  p,\Sigma_{j}\right)  \;;\;i\in
\mathbb{N}\right\}  <\infty$ and $\Sigma_{j}$ converges in the
Hausdorff-Gromov sense to a closed curve $\Sigma_{j}^{Z}\subset Z$,

\item $\lim\limits_{i\in\mathbb{N}}d_{M_{i}}\left(  p,\Sigma_{j}\right)
=\infty$.
\end{enumerate}

\noindent This dichotomy allows us to write $\Sigma=\Sigma_{0}\sqcup
\Sigma_{\infty}$, where $\Sigma_{0}$ contains the components $\Sigma_{j}$ of
$\Sigma$ which satisfies item (1) (in particular $\Sigma_{1}\subset\Sigma_{0}%
$) and $\Sigma_{\infty}$ those that satisfies the item (2). Now consider the
compact set%
\[
\Sigma_{Z}=%
{\textstyle\bigcup\limits_{\Sigma_{j}\subset\Sigma_{0}}}
\Sigma_{j}^{Z}\subset Z\text{.}%
\]

The following proposition provides information on the dimension of the limit
of a sequence of hyperbolic cone-manifolds which collapses.

\begin{lemma}
\label{effondre a dist borne implica dim menor}Given a sequence $p_{i}\in M$,
suppose that the sequence $\left(  M_{i},p_{i}\right)  $ converges in the
Hausdorff-Gromov sense to a pointed Alexandrov space $\left(  Z,z_{0}\right)
$. If the sequence $M_{i}$ collapses and verifies%
\[
\sup\left\{  \mathcal{L}_{M_{i}}\left(  \Sigma_{j}\right)  \;;\;i\in
\mathbb{N}\;,\;j\in\left\{  1,\ldots,l\right\}  \right\}  <\infty\text{,}%
\]
then the dimension of $Z$ is strictly smaller than $3$.
\end{lemma}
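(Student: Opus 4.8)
The plan is to argue by contradiction. Since every $M_i$ is a three-dimensional Alexandrov space of curvature not smaller than $-1$, the limit $Z$ automatically has dimension at most $3$; hence it suffices to exclude the case $\dim Z = 3$. Assuming $\dim Z = 3$, I will produce a sequence $q_i \in M_i - \Sigma$ whose injectivity radii $r_{inj}^{M_i - \Sigma}(q_i)$ stay bounded away from zero, in direct contradiction with the hypothesis that $M_i$ collapses.

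First I would use the length control to select a convenient base point in the limit. Because the lengths $\mathcal{L}_{M_i}(\Sigma_j)$ are uniformly bounded, each limit curve $\Sigma_j^Z$ arises as the Hausdorff-Gromov limit of closed curves of uniformly bounded length and is therefore a compact set of finite length, so $\dim_H \Sigma_j^Z \leq 1$. Consequently the compact set $\Sigma_Z$ satisfies $\mathcal{H}^3(\Sigma_Z) = 0$ and has empty interior in the $3$-dimensional space $Z$, so I may choose a point $z \in Z - \Sigma_Z$ and a radius $\rho > 0$ with $d_Z(z, \Sigma_Z) > 2\rho$ and $\mathcal{H}^3(B_Z(z,\rho)) > 0$.

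Next I would transport this picture back to the manifolds $M_i$. Using the Hausdorff-Gromov approximations, pick $q_i \in M_i$ converging to $z$; then $d_{M_i}(q_i, \Sigma) > \rho$ for all large $i$, so the ball $B_{M_i}(q_i, \rho)$ lies entirely in $M_i - \Sigma$ and carries the smooth hyperbolic metric of constant sectional curvature $-1$. The decisive point is that, since $\dim Z$ equals the common dimension of the $M_i$, the convergence is non-collapsing around $z$: the top-dimensional Hausdorff measures converge, giving a uniform bound $\mathrm{vol}\, B_{M_i}(q_i, \rho) \geq v > 0$. On this ball the two-sided bound $|\sec| \equiv 1$ holds, so Cheeger's injectivity-radius estimate furnishes a uniform $i_0 > 0$ with $r_{inj}^{M_i}(q_i) \geq i_0$; as the entire ball $B_{M_i}(q_i, \rho)$ avoids $\Sigma$, this gives $r_{inj}^{M_i - \Sigma}(q_i) \geq \min(i_0, \rho) > 0$, the contradiction sought.

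I expect the hard part to be the non-collapsing step, namely deducing the uniform volume lower bound $\mathrm{vol}\, B_{M_i}(q_i, \rho) \geq v$ from the bare equality $\dim Z = 3$; this relies on the structure theory of Alexandrov spaces, in particular on the continuity of the top-dimensional Hausdorff measure under dimension-preserving Hausdorff-Gromov convergence. One must also be careful to keep all the relevant balls at a definite distance from $\Sigma$, so that the smooth comparison estimates genuinely apply — and it is precisely the length hypothesis that prevents $\Sigma_Z$ from becoming dense and thereby guarantees the existence of the base point $z$.
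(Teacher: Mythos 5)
Your argument is correct, but it runs in the opposite logical direction from the paper's and uses different machinery, so it is worth comparing the two. The paper argues directly: after choosing $z_{\infty}\in Z-\Sigma_{Z}$ and points $q_{i}$ whose $\tfrac{\delta}{3}$-balls avoid $\Sigma$ (exactly your first two steps, and with the same use of the length hypothesis, namely that $\Sigma_{Z}$ is a finite union of rectifiable curves and hence cannot be all of a three dimensional $Z$), it deforms the metrics near $\Sigma$ via \cite[Lemma 3.8]{HK} to obtain complete Riemannian manifolds $N_{i}$ homeomorphic to $M-\Sigma$ with \emph{pinched} sectional curvature which still collapse at $q_{i}$, and then quotes Gromov \cite[Corollary 8.39]{Gro}: collapse under a two-sided curvature bound forces the limit ball $B_{Z}\left(z_{\infty},\tfrac{\delta}{3}\right)$ to have Hausdorff dimension smaller than $3$; constancy of the dimension of an Alexandrov space finishes. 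You instead assume $\dim Z=3$ and derive non-collapse, replacing Gromov's collapsing theorem by the Burago--Gromov--Perelman volume-convergence theorem (to get $\mathrm{vol}\,B_{M_{i}}\left(q_{i},\rho\right)\geq v>0$) combined with the Cheeger--Gromov--Taylor local injectivity-radius estimate. Your route has the genuine advantage of never touching the singular locus: every ball you use lies in the smooth hyperbolic part, so the metric-smoothing step of \cite{HK} and \cite{Sal} is not needed at all, at the price of invoking two black boxes instead of one. Two points deserve a precise citation in your write-up: the \emph{local} (pointed) form of volume convergence, $\mathcal{H}^{3}\left(B_{M_{i}}\left(q_{i},r\right)\right)\rightarrow\mathcal{H}^{3}\left(B_{Z}\left(z,r\right)\right)$, since metric balls are not themselves Alexandrov spaces and one cannot apply the compact version verbatim; and the local form of Cheeger--Gromov--Taylor, which does apply here because geodesic loops at $q_{i}$ of length less than $2\rho$ and minimizing geodesics from $q_{i}$ to points of $B_{M_{i}}\left(q_{i},\rho\right)$ remain inside that ball and hence inside $M_{i}-\Sigma$, where the metric is smooth of curvature $-1$.
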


\begin{proof}
We can assume that there exists $z_{\infty}\in Z-\Sigma_{Z}$, since the
equality $\Sigma_{Z}=Z$ can only happens when the dimension of $Z$ is strictly
smaller than $3$. Set $\delta=d_{Z}\left(  z_{\infty},\Sigma_{Z}\right)  >0$
and take a sequence $q_{i}\in M-\Sigma$ such that%
\[
\lim_{i\rightarrow\infty}q_{i}=z_{\infty}\qquad\text{and}\qquad B_{M_{i}%
}\left(  q_{i},\frac{\delta}{3}\right)  \cap B_{M_{i}}\left(  \Sigma
,\frac{\delta}{3}\right)  =\emptyset\text{.}%
\]
By definition, the sequence $\left(  M_{i},q_{i}\right)  $ converges in the
Hausdorff-Gromov sense to $\left(  Z,z_{\infty}\right)  $. Since the sequence
$M_{i}$ collapses, we have $\lim\limits_{i\rightarrow\infty}r_{inj}^{M_{i}%
}\left(  q_{i}\right)  =0$.

According to \cite[Lemma 3.8]{HK} we can change the metrics of $M_{i}$ on
$B_{M_{i}}\left(  \Sigma,\frac{\delta}{3}\right)  -\Sigma$ to obtain a
sequence of complete Riemannian manifolds $N_{i}$ homeomorphic to $M-\Sigma$
which have finite volume and pinched sectional curvature. Without loss of
generality, we can assume that the sequence $\left(  N_{i},q_{i}\right)  $
converges in the Hausdorff-Gromov sense to a pointed Alexandrov space $\left(
Y,y_{\infty}\right)  $. Note that, by construction, the balls $B_{N_{i}%
}\left(  q_{i},\frac{\delta}{3}\right)  $ and $B_{M_{i}}\left(  q_{i}%
,\frac{\delta}{3}\right)  $ are isometric. Then the balls $B_{Y}\left(
y_{\infty},\frac{\delta}{3}\right)  $ and $B_{Z}\left(  z_{\infty}%
,\frac{\delta}{3}\right)  $ are also isometric and we have $\lim
\limits_{i\rightarrow\infty}r_{inj}^{N_{i}}\left(  q_{i}\right)  =0$.

According to \cite[Corollary 8.39]{Gro}, $B_{Z}\left(  z_{\infty},\frac
{\delta}{3}\right)  $ has Hausdorff dimension strictly smaller than $3$. Given
that the dimension of an Alexandrov space is a well-defined integer, it
follows that $Z$ also has dimension strictly smaller than $3$.\hfill
\end{proof}

The preceding lemma says that the only possibilities for the dimension of $Z$
are $2$, $1$ or $0$. The proof of Theorem (\ref{teo principal}) will
accordingly be split in three cases.

\subsubsection{Case where $Z$ has dimension $2$}

It is a classical result of metric geometry that all two dimensional
Alexandrov spaces are topological manifolds of dimension $2$, possibly with
boundary. We can describe the boundary of $Z$ as%
\[
\partial Z=%
{\textstyle\bigsqcup\limits_{k\in\Gamma}}
\partial Z_{k}%
\]
where $\partial Z_{k}$ are the connected components of $\partial Z$. Note that
each component $\partial Z_{k}$ is homeomorphic to a circle or to a straight line.

\begin{theorem}
[collapsing - dimension 2]\label{effondrement en dim 2}Suppose the existence
of a point $p\in\Sigma_{1}$ such that the sequence $\left(  M_{i},p\right)  $
converges (in the Hausdorff-Gromov sense) to a two dimensional pointed
Alexandrov space $\left(  Z,z_{0}\right)  $ and also that $\sup\left\{
R_{i}\left(  \Sigma_{1}\right)  \;;\;i\in\mathbb{N}\right\}  <\infty$.
Assuming that%
\[
\sup\left\{  \mathcal{L}_{M_{i}}\left(  \Sigma_{j}\right)  \;;\;i\in
\mathbb{N}\;,\;j\in\left\{  1,\ldots,l\right\}  \right\}  <\infty\text{,}%
\]
it follows that:

\begin{itemize}
\item[i.] $M$ is Seifert fibered,

\item[ii.] if $\partial Z\neq\emptyset$, then $\partial Z$ has only one
connected component and $M$ is a lens space,

\item[iii.] if $Z$ is not compact, then $\lim\limits_{i\in\mathbb{N}}%
R_{i}\left(  \Sigma_{j}\right)  =\infty$, for any component $\Sigma_{j}$ of
$\Sigma_{\infty}$.
\end{itemize}
\end{theorem}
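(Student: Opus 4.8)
The plan is to first replace the singular cone metrics by smooth Riemannian metrics and then feed the resulting sequence into Yamaguchi's Fibration Theorem. Concretely, I would smooth the cone metric of each $M_i$ in a thin tubular neighborhood of $\Sigma$, rounding off each normal cone of angle $\alpha_{ij}\le 2\pi$; since a cone angle at most $2\pi$ contributes nonnegative curvature to the normal cross-section, this can be arranged so that the resulting \emph{closed} Riemannian manifold $N_i$, which is homeomorphic to $M$, has sectional curvature bounded below by $-1$. By Lemma \ref{salgueiro fica proximo} the perturbation can be made with arbitrarily small Hausdorff--Gromov effect, so after a diagonal choice one still has $(N_i,p)\to(Z,z_0)$ with $Z$ two-dimensional, and I would keep careful track of the core curves $\Sigma_j\subset N_i$ throughout.

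Next I would apply Theorem \ref{teo fibracao} to $(N_i,p)\to(Z,z_0)$. Since $Z$ is a two-dimensional Alexandrov space it is a topological surface with boundary, whose non-regular locus consists of $\partial Z$ together with finitely many interior cone points in each compact region. Over compact domains inside $R_\lambda(Z)$ the theorem produces honest circle fibrations, so $M$ carries a genuine $S^1$-bundle structure over the regular part. The heart of the matter is the local analysis over the exceptional locus: over an interior cone point the preimage should be a fibered solid torus producing an exceptional fiber, while over a collar of a boundary component $\partial Z_k$ the preimage should be a solid torus whose $S^1$-fiber is a meridian. I expect each boundary component to be accounted for by a component of $\Sigma_0$ whose normal (meridional) direction collapses while its core survives, so that $\partial Z_k=\Sigma_{j}^Z$ and the corresponding tube $B_{M_i}\left(\Sigma_j,R\right)$ is exactly the boundary solid torus.

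To globalize, I would take the tori bounding the fibered region and run them through Lemma \ref{Classificacao de Toros}: each is parallel to a component of $\Sigma$, bounds a solid torus in $M-\Sigma$, or sits in a ball bounding a knot exterior; in the last case Lemma \ref{troca exterior de no por toro} lets me replace the knotted piece by a solid torus without changing $M$. After this cleanup $M$ is the union of a circle bundle over the interior of $Z$ with solid tori glued over the cone points and the boundary components, compatibly fibered, whence $M$ is Seifert fibered, giving (i). For (ii), if $\partial Z\neq\emptyset$ the regular fiber coincides with a meridian near $\partial Z_k$ and is therefore null-homotopic, and a closed orientable irreducible Seifert manifold with null-homotopic regular fiber is a lens space. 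The single-component assertion I would obtain by forcing the base to be a disk: two or more boundary circles, or positive genus, would make $M$ decompose in a manner incompatible with being this lens space, so that irreducibility together with the meridian fibers forces $Z=D^2$ and $M=V_1\cup V_2$ is a union of two solid tori.

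Finally, for (iii) I would argue by contradiction. If $Z$ is non-compact but some component $\Sigma_j\subset\Sigma_\infty$ had $\sup_i R_i\left(\Sigma_j\right)<\infty$, then re-centering the pointed convergence at a point of $\Sigma_j$ and repeating the dichotomy $\Sigma=\Sigma_0\sqcup\Sigma_\infty$ would make the corresponding end of $Z$ look like the compact model of parts (i)--(ii), closing it off and contradicting non-compactness; hence $R_i\left(\Sigma_j\right)\to\infty$ for every component of $\Sigma_\infty$. The main obstacle throughout is the second step: identifying the exact topological type of the preimages over the cone points and over $\partial Z$ and checking that they glue to a bona fide Seifert fibration. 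This is where the interplay between the collapsing direction (core versus meridian of the tubes around $\Sigma$) and Yamaguchi's circle fibers must be pinned down, and where Lemmas \ref{Classificacao de Toros} and \ref{troca exterior de no por toro} do the real work.
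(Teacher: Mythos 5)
Your overall strategy (smooth the cone metrics near $\Sigma$, keep the Hausdorff--Gromov limit via Lemma \ref{salgueiro fica proximo}, apply Theorem \ref{teo fibracao} over a compact domain in $R_{\lambda}\left(Z\right)$, and clean up the complementary pieces with Lemmas \ref{Classificacao de Toros} and \ref{troca exterior de no por toro}) is the same as the paper's. But the two places you flag as ``the heart of the matter'' are genuine gaps, and one of your guesses there points in the wrong direction. First, the identification of the preimages over interior cone points and over collars of $\partial Z$ is not something you can leave open: the paper obtains it by rescaling the metric along the collapsing fiber and invoking Perelman's stability theorem, as carried out in Shioya--Yamaguchi (Theorems 0.2 and 0.3 of \cite{SY}); over a cone point one gets a solid torus glued \emph{without} killing the fiber, while over a collar of $\partial Z_{k}$ one gets a piece $\alpha\times D^{2}$ glued so as to \emph{kill} the fiber. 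Your heuristic that each boundary component $\partial Z_{k}$ is ``accounted for by a component of $\Sigma_{0}$'' with $\partial Z_{k}=\Sigma_{j}^{Z}$ is unjustified and not what happens: boundary of the two-dimensional limit arises from interval-type collapse of the regular part and need not meet $\Sigma_{Z}$ at all. Consequently your route to (ii) (``the regular fiber is a meridian near $\partial Z_{k}$, hence null-homotopic, hence $M$ is a lens space'') is not established; the paper instead shows that an essential properly immersed arc in the base $\mathcal{Y}$ would lift to an essential annulus that the fiber-killing solid tori turn into an essential sphere, contradicting irreducibility, which forces $\mathcal{Y}$ to be a disk with at most one cone point and hence $M$ to be a union of two solid tori.

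Second, your argument for (iii) does not work. Re-centering the sequence at points of $\Sigma_{j}\subset\Sigma_{\infty}$ produces a \emph{different} pointed limit, based at points whose distance to $p$ diverges; nothing about that limit can ``close off'' an end of $Z$, so no contradiction with the non-compactness of $Z$ is obtained. The paper's actual argument is direct: $\Sigma_{\infty}$ lies in the components of $N_{i}-\mathcal{N}_{i}$ associated with unbounded components of $Z-Y$, Lemma \ref{Classificacao de Toros} shows the singular such components are solid tori whose souls are components of $\Sigma_{\infty}$, and since the boundary tori of $\mathcal{N}_{i}$ stay at bounded distance from $p$ while $d_{M_{i}}\left(p,\Sigma_{j}\right)\rightarrow\infty$, the embedded tubular neighborhood of $\Sigma_{j}$ can be taken with radius tending to infinity, i.e. $R_{i}\left(\Sigma_{j}\right)\rightarrow\infty$. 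You should also note that when $Z$ is compact the paper bypasses the fibration construction entirely and quotes Shioya--Yamaguchi (Theorem 0.2, Theorem 0.3 and Corollary 0.4 of \cite{SY}) directly.
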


\begin{proof}
According to \cite{Sal}, the metric of hyperbolic cone-manifolds $M_{i}$ in
some small neighborhoods of $\Sigma$ can be deformed to yield a sequence of
complete Riemannian manifolds $N_{i}$ (homeomorphic to $M$) with sectional
curvature not smaller than $-1$. Furthermore thanks to Lemma
(\ref{salgueiro fica proximo}), we can suppose that the sequence $\left(
N_{i},p\right)  $ converges (in the Hausdorff-Gromov sense) to $\left(
Z,z_{0}\right)  $.

If $Z$ is compact, the statement follows directly from the results of Shioya
and Yamaguchi in \cite{SY}. To be more precise, when $Z$ does not have
boundary, \cite[Theorem 0.2]{SY} applied to the sequence $N_{i}$ gives that
$M$ is Seifert fibered. When $Z$ has boundary, \cite[Corollary 0.4]{SY}
applied to the sequence $N_{i}$ implies ($M$ is irreducible) that $Z$ is a
closed disk with at most one cone in its interior. Furthermore, \cite[Theorem
0.3]{SY} implies that $M$ is a lens space (in particular it is also Seifert fibered).

Suppose from now on that $Z$ is not compact. Since the normal injectivity radius
of components of $\Sigma_{0}$ are uniformly bounded, there exists $R>0$ such
that%
\[
B_{M_{i}}\left(  \Sigma_{j},R_{i}\left(  \Sigma_{j}\right)  \right)  \subset
B_{M_{i}}\left(  p,R/2\right)
\]
for all $i\in\mathbb{N}$ and for every component $\Sigma_{j}\subset\Sigma_{0}%
$. Let $K$ be a compact and connected two dimensional submanifold of $Z$ such
that $B_{Z}\left(  z_{0},R\right)  \subset K$ (therefore $\Sigma_{Z}\subset
K$), $\partial K$ is a disjoint union of circles and $Z-K$ is a disjoint union
of components of infinite diameter.

Set $\Lambda=\left\{  k\in\Gamma\;;\;\partial Z_{k}\cap K\neq\emptyset
\right\}  $. If $\partial Z\neq\emptyset$, we assume also that:

\begin{itemize}
\item[$\bullet$] $\Lambda\neq\emptyset$, that is $K\cap\partial Z\neq
\emptyset$,

\item[$\bullet$] $\partial Z_{k}\cap K=\partial Z_{k}$, for all $k\in\Lambda$
such that the component $\partial Z_{k}$ is compact,

\item[$\bullet$] $\partial Z_{k}\cap K$ is connected, for all $k\in\Lambda
$. 
\end{itemize}

For each $k\in\Lambda$, denote by $\partial K_{k}$ the (unique by
construction) connected component of $\partial K$ such that $\partial
Z_{k}\cap\partial K_{k}\neq\emptyset$. The boundary of $K$ is given by%
\[
\partial K=%
{\textstyle\bigsqcup\limits_{k\in\Lambda}}
\partial K_{k}\sqcup%
{\textstyle\bigsqcup\limits_{m\in\Gamma-\Lambda}}
\partial K_{m}\text{ ,}%
\]
where $\partial K_{m}$ are the components of $\partial K$ which does not
intersect the boundary of $Z$.

Let $\lambda$ be the constant given by the Fibration Theorem
(\ref{teo fibracao}). Since $K$ is compact, the sets of $\lambda$-cone-points%
\[
\mathcal{C}=\left\{  z\in K-\partial Z\;;\;\mathcal{L}\left(  T_{z}Z\right)
\leq2\pi-\lambda\right\}
\]%
\[
\mathcal{C}_{k}=\left\{  z\in\partial Z_{k}\cap K\;;\;\mathcal{L}\left(
T_{z}Z\right)  \leq\pi-\lambda\right\}  \subset\partial K_{k}\qquad\left(
k\in\Lambda\right)
\]
are finite (cf. \cite[Theorem 2.1]{SY}). Modulo increasing $K$, we can assume
that $C$ is contained in its interior. Let $s_{1}\gg s_{2}>0$ be such that

\begin{itemize}
\item[$\bullet$] $B_{K}\left[  z,s_{1}\right]  \subset int\left(  K\right)  $
and $B_{K}\left[  z,s_{1}\right]  $ is homeomorphic to $D^{2}$, for all
$z\in\mathcal{C}$,

\item[$\bullet$] $B_{K}\left[  z,s_{1}\right]  \cap B_{K}\left[  z^{\prime
},s_{1}\right]  =\emptyset$, for all $z,z^{\prime}\in\mathcal{C}\cup
\bigcup\limits_{k\in\Lambda}\mathcal{C}_{k}$,

\item[$\bullet$] For all $z\in\mathcal{C}\cup\bigcup\limits_{k\in\Lambda
}\mathcal{C}_{k}$ and for all $k\in\Lambda$, we have%
\[
B_{k}\left(  \partial K_{k},s_{2}\right)  \cap B_{K}\left(  z,s_{1}\right)
\neq\emptyset\qquad\text{if and only if}\qquad z\in\mathcal{C}_{k}\text{.}%
\]

\end{itemize}

Consider the compact subset $Y$ of $Z$ defined by%
\[
Y=K-%
{\textstyle\bigcup\limits_{z\in\mathcal{C}}}
B_{K}\left(  z,s_{1}\right)  -%
{\textstyle\bigsqcup\limits_{k\in\Lambda}}
U_{k}\subset R_{\lambda}\left(  Z\right)  \text{,}%
\]
where $U_{k}=B_{K}\left(  \partial Z_{k},s_{2}\right)  \cup%
{\textstyle\bigcup\limits_{z\in\mathcal{C}_{k}}}
B_{K}\left(  z,s_{1}\right)  $, for every $k\in\Lambda$. The boundary of $Y$
is given by%
\begin{equation}
\partial Y=%
{\textstyle\bigsqcup\limits_{k\in\Lambda}}
\partial Y_{k}\sqcup%
{\textstyle\bigsqcup\limits_{z\in\mathcal{C}}}
\partial Y_{z}\sqcup%
{\textstyle\bigsqcup\limits_{m\in\Gamma-\Lambda}}
\partial K_{m}\text{,} \label{bordo de Y}%
\end{equation}
where $\partial Y_{k}$ and $\partial Y_{z}$ are the components of $\partial Y
$ which intersect respectively the regions $U_{k}$ and $B_{K}\left(
z,s_{1}\right)  $.

Without loss of generality, the Fibration Theorem (\ref{teo fibracao}) gives
us a sequence $\mathcal{N}_{i}$ of compact and connected three dimensional
submanifolds of $M$, a sequence $\tau_{i}>0$ that converges to zero and a
sequence of $\tau_{i}$-approximations $\mathfrak{p}_{i}:\mathcal{N}%
_{i}\rightarrow Y$ which induces a structure of locally trivial circle bundle
on $\mathcal{N}_{i}$. Since the manifold $M$ is orientable, it follows that
the connected components of $\partial\mathcal{N}_{i}=\mathfrak{p}_{i}%
^{-1}\left(  \partial Y\right)  $ are two dimensional tori.

Let $C$ be a component of $\partial Y$\textit{\ }and $T_{i}$ the component of
$\partial\mathcal{N}_{i}$ associated with it ($T_{i}=\mathfrak{p}_{i}%
^{-1}\left(  C\right)  $). By construction, we can choose base points
$q_{i}\in T_{i}$ and $q\in C$ such that the pointed components $\left(
B_{i},q_{i}\right)  $ of $N_{i}-int\left(  \mathcal{N}_{i}\right)  $
associated with the torus $T_{i}$ ($\partial B_{i}=T_{i}$), converges in the
Hausdorff-Gromov sense to the pointed component $\left(  B,q\right)
$\textit{\ }of\textit{\ }$Z-int\left(  Y\right)  $ associated to $C$
($\partial B=C$)\textit{.}%

\begin{figure}
[h]
\begin{center}
\includegraphics[scale=0.6]{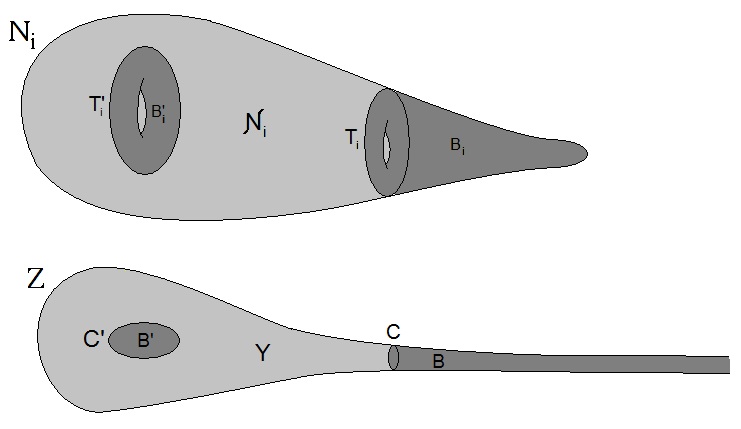}%
\caption{}%
\label{figura posta depois sem abusar}
\end{center}
\end{figure}

The above paragraph shows that the bijection between the boundary components
of $Z-Y$ and those of $N_{i}-\mathcal{N}_{i}$ induces a bijection between the
components of $Z-Y$ and those of $N_{i}-\mathcal{N}_{i}$. Moreover, the
diameter of the components of $N_{i}-\mathcal{N}_{i}$ associated with a
component $B$ of $Z-Y$ goes to infinity if and only if $B$ is noncompact.

Without loss of generality, we can suppose that $\Sigma_{\infty}\subset
N_{i}-\mathcal{N}_{i}$, for all $i\in\mathbb{N}$. It follows from lemma
(\ref{Classificacao de Toros}) that the singular components of $N_{i}%
-\mathcal{N}_{i}$ are solid tori whose souls are components of $\Sigma
_{\infty}$. Moreover, as the boundary components of $\mathcal{N}_{i}$ remain a
finite distance from the base point, the normal injectivity radii of the
components of $\Sigma_{\infty}$ became infinite with $i$. Note that this
implies that the singular components of $N_{i}-\mathcal{N}_{i}$ must be associated
with noncompact components of $Z-Y$.

\begin{lemma}
\label{lemma collapse dim 2}\textit{There exists }$i_{0}\in N$\textit{\ such
that, for every }$i>i_{0}$\textit{, the components of }$N_{i}-N_{i}%
$\textit{\ are homeomorphic to either a solid torus or to the exterior of a
knot in }$S^{3}$\textit{\ which, in addition, is contained in an embedded ball
in }$M-\Sigma$\textit{.}
\end{lemma}

\noindent\textbf{Proof of Lemma \ref{lemma collapse dim 2} : }A boundary torus
of $N_{i}-\mathcal{N}$ has 3 possible natures according to the decomposition
(\ref{bordo de Y}) of $\partial Y$. We shall consider separately each of there
possibilities. 

\bigskip
\noindent\textbf{1}$^{\text{\textbf{st}}}$ \textbf{type }: \textit{The torus
}$T_{i}^{z}=\mathfrak{p}_{i}^{-1}\left(  \partial Y_{z}\right)  \subset
\partial N_{i}$\textit{, }$z\in\mathcal{C}$\textit{.} 

Consider a point $z\in\mathcal{C}$. By rescaling the metric with respect to
the length of the fibers and using the stability theorem of Perelman (cf.
\cite{Kap}) , Shioya and Yamaguchi show in \cite[theorem 0.2]{SY} that the
region $B_{i}^{z}$ of $N_{i}-\mathcal{N}_{i}$ bounded by the torus $T_{i}^{z}
$ is a solid torus, for sufficiently large $i$. Furthermore, the solid torus
$B_{i}^{z}$ is glued on $\mathcal{N}_{i}$ without "killing" the fiber. 

\bigskip
\noindent\textbf{2}$^{\text{\textbf{nd}}}$ \textbf{type}: \textit{The torus
}$T_{i}^{k}=\mathfrak{p}_{i}^{-1}\left(  \partial Y_{k}\right)  \subset
\partial N_{i}$\textit{, }$k\in\Lambda$\textit{.} %
\begin{figure}
[h]
\begin{center}
\includegraphics[scale=0.5]{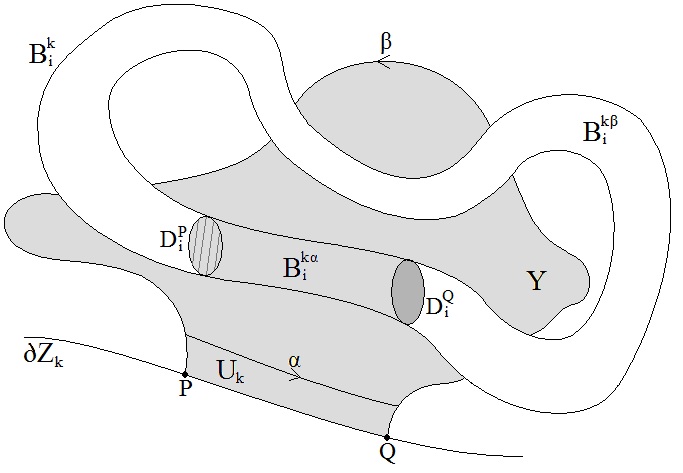}%
\caption{Tori of second type}%
\label{figura artigo}%
\end{center}
\end{figure}

Given an index $k\in\Lambda$, we can decompose the component $\partial Y_{k}$
in two simple arcs $\alpha$ and $\beta$ with the same ends, say $P$ and $Q$,
and such that $\alpha=$ $U_{k}\cap\partial Y_{k}$. Note that $\alpha=\partial
Y_{k}$ and $\beta$ is reduced to a single point (in particular $P=Q$) when
$\partial Z_{k}$ is compact.

Let $B_{i}^{k}$ be the region of $N_{i}-\mathcal{N}_{i}$ bounded by the torus
$T_{i}^{k}$. Using the same techniques, \cite[Theorem 0.3]{SY} shows that we
can choose, for sufficiently large $i$, a region $B_{i}^{k\alpha}$ inside
$B_{i}^{k}$ which is homeomorphic to $\alpha\times D^{2}$. Moreover:

\begin{itemize}
\item[$\bullet$] the sequence formed by the regions $B_{i}^{k\alpha}$
converges in the Hausdorff-Gromov sense to the region $U_{k}$,

\item[$\bullet$] the circle $\left\{  z\right\}  \times\partial D^{2}$ is a
fiber of the Seifert fibration on the boundary of $\mathcal{N}_{i}$, for every
$z\in\alpha$.
\end{itemize}

\noindent Then, for sufficiently large $i$, the regions $B_{i}^{k}$ are
homeomorphic to solid tori provided that $\partial Z_{k}$ is compact. Suppose
now that $\partial Z_{k}$ is not compact.

Let $D_{i}^{P}$ and $D_{i}^{Q}$ be the disks of $B_{i}^{k\alpha}$ which are
glued respectively on the fibers $\mathfrak{p}_{i}^{-1}\left(  P\right)  $ and
$\mathfrak{p}_{i}^{-1}\left(  Q\right)  $. The fibers above $\beta$ together
with the disks $D_{i}^{P}$ and $D_{i}^{Q}$ yield an embedded sphere in
$N_{i}-\Sigma$. This sphere splits $N_{i}$ into two regions: the region
$B_{i}^{k\beta}=B_{i}^{k}-int\left(  B_{i}^{k\alpha}\right)  $ and the region
$N_{i}-int\left(  B_{i}^{k\beta}\right)  $ which contain the singular set
$\Sigma_{0}\neq\emptyset$. Since the manifold $N_{i}-\Sigma$ is irreducible,
the region $B_{i}^{k\beta}$ must be homeomorphic to a ball and therefore, for
sufficiently large $i$, the region $B_{i}^{k}=B_{i}^{k\alpha}\cup
B_{i}^{k\beta}$ is homeomorphic to a solid torus. 

\bigskip
\noindent\textbf{3}$^{\text{\textbf{th}}}$ \textbf{type}: \textit{The torus
}$T_{i}^{m}=\mathfrak{p}_{i}^{-1}\left(  \partial K_{m}\right)  \subset
\partial N_{i}$\textit{, }$m\in\Gamma-\Lambda$\textit{.} 

Given an index $m\in\Gamma-\Lambda$, let $B_{i}^{m}$ be the region of
$N_{i}-\mathcal{N}_{i}$ bounded by the torus $T_{i}^{k}$. Since $B_{Z}\left(
z_{0},R\right)  \subset K$, the torus $T_{i}^{m}$ is contained in $M-\Sigma$
and cannot be parallel to a component of $\Sigma_{0}$. According to Lemma
(\ref{Classificacao de Toros}), the region $B_{i}^{m}$ is homeomorphic either
to a solid torus (possibly having a component of $\Sigma_{\infty}$ as its
soul) or to the exterior of a knot in $S^{3}$ which is contained in an
embedded ball in $M-\Sigma$.\hfill$\diamond$

\bigskip
Fix from now on a index $i>i_{0}$. According to Lemma
(\ref{troca exterior de no por toro}), the preceding claim shows that all
components of $N_{i}-\mathcal{N}_{i}$ can be supposed to be homeomorphic to
solid tori.

Let $\mathcal{W}_{i}$ be the three manifold obtained by the union of
$\mathcal{N}_{i}$ with all torus components of $N_{i}-\mathcal{N}_{i}$ which
are glued to the boundary of $\mathcal{N}_{i}$ without "killing" the fibers of
the Seifert fibration. For example, $B_{i}^{z}\subset\mathcal{W}_{i}$ and
$B_{i}^{k}\subset N_{i}-\mathcal{W}_{i}$, for all $z\in\mathcal{C}$ and
$k\in\Lambda$.

The Seifert fibration on $\mathcal{N}_{i}$ (with base $Y$) naturally extends
to a Seifert fibration on all off $\mathcal{W}_{i}$. Moreover, the underlying
space of  its basis is the topological surface $\mathcal{Y}$ obtained by
gluing disks on the components of $\partial Y$ associated with the components
of $\mathcal{W}_{i}-\mathcal{N}_{i}$. If $\mathcal{W}_{i}=N_{i}$ (so that
$\partial\mathcal{Y}=\emptyset$), the manifold $M$ is Seifert fibered and
$\partial Z=\emptyset$. Thus let us assume from now on that $\mathcal{W}%
_{i}\neq N_{i}$ (so that $\partial\mathcal{Y}\neq\emptyset$ and $\partial
\mathcal{W}_{i}\neq\emptyset$).

Suppose the existence of an essential arc $\gamma$ properly immersed in
$\mathcal{Y}$. Then the fibration above $\gamma$ provides an essential annulus
embedded in $\mathcal{W}_{i}$. Since all the components of $N_{i}%
-\mathcal{W}_{i}$ are solid torus that are glued to $\partial\mathcal{W}_{i}$
so as to "kill" the fibers of the Seifert fibration, this essential annulus
becomes an essential sphere in $N_{i}$. This is impossible since $M$ is
irreducible. Therefore $\mathcal{Y}$ has non-negative Euler characteristic, it
has exactly one boundary component and it contains at most one interior cone
point. In particular, it follows that $Z$ has at most one boundary component
and at most one interior cone point. In fact, if $Z$ has two or more interior
cone points (boundary components), then the compact $K$ can be chosen so as to
contains more than one interior cone points (boundary components) and this
would contradict to the non existence of a second interior cone point (boundary components)
in $\mathcal{Y}$.

Because $\mathcal{Y}$ is compact, the discussion above implies that
$\mathcal{Y}$ is homeomorphic to a disk $D^{2}$. So $\mathcal{W}_{i}$ is
homeomorphic to a solid torus and thus that $M$ is a lens space.\hfill
\end{proof}

\subsubsection{Case where $Z$ has dimension $1$}

It is also a standard result of metric geometry that all one dimensional
Alexandrov spaces are topological manifolds of dimension $1$. Precisely, they
are homeomorphic to on of the following models: $S^{1}$, $\left[  0,1\right]
$, $\left[  0,\infty\right)  $ or $\left(  -\infty,\infty\right)  $.

\begin{theorem}
[collapsing - dimension 1]\label{effondrement en dim 1}Suppose the existence
of a point $p\in\Sigma_{1}$ such that the sequence $\left(  M_{i},p\right)  $
converges (in the Hausdorff-Gromov sense) to a one dimensional pointed
Alexandrov space $\left(  Z,z_{0}\right)  $ and also that $\sup\left\{
R_{i}\left(  \Sigma_{1}\right)  \;;\;i\in\mathbb{N}\right\}  <\infty$.
Assuming that%
\[
\sup\left\{  \mathcal{L}_{M_{i}}\left(  \Sigma_{j}\right)  \;;\;i\in
\mathbb{N}\;,\;j\in\left\{  1,\ldots,l\right\}  \right\}  <\infty\text{,}%
\]
it follows that:

\begin{itemize}
\item[i.] $Z$ is homeomorphic to $S^{1}$ and $M$ is a euclidean, $Nil$ or
$Sol$ manifold,

\item[ii.] $Z$ is homeomorphic to $\left[  0,1\right]  $ and $M$ is a lens
space or a euclidean, $Nil$, $Sol$ or prism manifold,

\item[iii.] $Z$ is homeomorphic to $\left[  0,\infty\right)  $ or $\left(
-\infty,\infty\right)  $, $M$ is a lens space or prism manifold and
$\lim\limits_{i\in\mathbb{N}}R_{i}\left(  \Sigma_{j}\right)  =\infty$, for
every component $\Sigma_{j}$ of $\Sigma_{\infty}$.
\end{itemize}
\end{theorem}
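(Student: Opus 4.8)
The plan is to follow the same strategy as in the proof of Theorem~\ref{effondrement en dim 2}, adapting it to a one-dimensional base. First I would use the deformation of \cite{Sal} together with Lemma~\ref{salgueiro fica proximo} to replace $M_{i}$ by a sequence of closed Riemannian manifolds $N_{i}$, homeomorphic to $M$, with sectional curvature bounded below by $-1$ and still converging in the Hausdorff-Gromov sense to $\left( Z,z_{0}\right) $. The essential difference with the two-dimensional case is that, since $Z$ now has dimension $1$, the Fibration Theorem~\ref{teo fibracao} produces fibrations whose fibers are tori rather than circles: over any compact regular interval $Y\subset Z$ the approximations $\mathfrak{p}_{i}:\mathcal{N}_{i}\rightarrow Y$ endow $\mathcal{N}_{i}$ with the structure of a torus bundle over an interval, hence $\mathcal{N}_{i}$ is homeomorphic to $T^{2}\times\left[ 0,1\right] $.

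When $Z$ is compact (so $Z\cong S^{1}$ or $Z\cong\left[ 0,1\right] $) I would appeal directly to the classification of three-manifolds that collapse to a compact one-dimensional Alexandrov space, due to Shioya and Yamaguchi \cite{SY}. If $Z\cong S^{1}$, the whole of $Z$ is regular and $N_{i}$ is a torus bundle over the circle; classifying the monodromy in $SL\left( 2,\mathbb{Z}\right) $ as periodic, parabolic or Anosov then shows that $M$ is respectively a euclidean, $Nil$ or $Sol$ manifold, which is item~(i). If $Z\cong\left[ 0,1\right] $, the interior produces a piece $T^{2}\times\left[ 0,1\right] $ and each endpoint is capped by a component of $N_{i}-\mathcal{N}_{i}$; the rescaling argument together with Perelman's stability theorem \cite{Kap}, exactly as in \cite{SY}, identifies every such cap as either a solid torus or a twisted $I$-bundle over the Klein bottle. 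Enumerating the possible pairs of caps and recognizing the resulting closed geometries then yields item~(ii).

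When $Z$ is noncompact (so $Z\cong\left[ 0,\infty\right) $ or $Z\cong\left( -\infty,\infty\right) $) I would proceed as in the noncompact part of Theorem~\ref{effondrement en dim 2}: choose a compact regular interval $Y$ with $B_{Z}\left( z_{0},R\right) \subset Y$ (so that $\Sigma_{Z}\subset Y$) and with $Z-Y$ a disjoint union of ends. The Fibration Theorem gives $\mathcal{N}_{i}$ homeomorphic to $T^{2}\times\left[ 0,1\right] $ together with a bijection between the ends of $Z$ and the components of $N_{i}-\mathcal{N}_{i}$. Over a finite endpoint the cap is again a solid torus or a twisted $I$-bundle over the Klein bottle, while over a noncompact end---which is where the singular link $\Sigma_{\infty}$ must lie---Lemma~\ref{Classificacao de Toros} forces the cap to be a solid torus whose soul is a component of $\Sigma_{\infty}$; since the boundary tori $\mathfrak{p}_{i}^{-1}\left( \partial Y\right) $ stay a bounded distance from the base point while these components recede, their normal injectivity radii $R_{i}\left( \Sigma_{j}\right) $ tend to infinity. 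Assembling the caps along $T^{2}\times\left[ 0,1\right] $ then shows that $M$ is a union of two pieces, each a solid torus or a twisted $I$-bundle over the Klein bottle, so that $M$ is a lens space or a prism manifold, which is item~(iii).

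The main obstacle will be the analysis of the caps over the endpoints of $Z$. Establishing that each such cap is either a solid torus or a twisted $I$-bundle over the Klein bottle requires the rescaling-by-fiber-length argument combined with Perelman's stability theorem, and the final identification of $M$ then rests on the combinatorics of how two such caps are glued to $T^{2}\times\left[ 0,1\right] $; in particular it rests on recognizing that a gluing of two Klein-bottle $I$-bundles can realize a euclidean, $Nil$ or $Sol$ geometry, which is precisely what accounts for the longer list of possibilities in the compact-boundary case~(ii).
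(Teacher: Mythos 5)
Your overall strategy coincides with the paper's: deform the metric near $\Sigma$ using \cite{Sal} together with Lemma~\ref{salgueiro fica proximo}, quote Shioya--Yamaguchi \cite{SY} when $Z$ is compact, and in the noncompact case apply the Fibration Theorem over a compact regular interval $Y$ and analyse the caps $N_{i}-\mathcal{N}_{i}$. There is, however, one genuine gap. You assert that Theorem~\ref{teo fibracao} endows $\mathcal{N}_{i}$ with the structure of a \emph{torus} bundle over an interval, but for a one-dimensional limit the Fibration Theorem only guarantees that the fibers are spheres \emph{or} tori, and the sphere case has to be excluded by an argument; the paper devotes a separate lemma to this. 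The argument is: since the sequence collapses (automatic here, as $\dim Z=1<3$ forces collapse), for points $q_{i}\in N_{i}-\Sigma$ converging to a point $z_{1}$ of $Z$ far from $\Sigma_{Z}$ one finds, for large $i$, homotopically nontrivial loops in $M-\Sigma$ based at $q_{i}$ of length at most $\delta$ (otherwise the injectivity radius at $q_{i}$ would be bounded below by $\delta/2$). If the fibers were spheres, a boundary sphere of $\mathcal{N}_{i}$ lying over an unbounded component of $Z-K$ would separate $M$ into a piece containing $\Sigma_{0}\neq\emptyset$ (recall $\Sigma_{1}\subset\Sigma_{0}$) and a piece containing such a loop; irreducibility of $M-\Sigma$ forces the latter piece to be a ball, contradicting the nontriviality of the loop. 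Without this step the torus-bundle picture, and hence everything downstream of it in your noncompact case, is unsupported.

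Two smaller points. First, the caps over noncompact ends are not forced by Lemma~\ref{Classificacao de Toros} alone to be solid tori with soul in $\Sigma_{\infty}$: that lemma also allows a nonsingular solid torus or a knot exterior contained in a ball of $M-\Sigma$, and the latter must be traded for a solid torus via Lemma~\ref{troca exterior de no por toro} before assembling $M$ (this is how the paper reduces to solid tori, carried over from the two-dimensional case). Second, for the cap over the finite endpoint when $Z\cong\left[0,\infty\right)$ the paper simply invokes \cite{SY} to get a solid torus or a twisted $S^{1}$-bundle over the M\"{o}bius band, which is the same object as your orientable twisted $I$-bundle over the Klein bottle; your extra discussion of monodromy and of gluings of two such bundles is harmless but is subsumed by the citation of \cite{SY} in the compact cases. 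With the sphere-fiber exclusion supplied, your argument matches the paper's.
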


\begin{proof}
If $Z$ is compact (i.e. homeomorphic to $S^{1}$ or $\left[  0,1\right]  )$,
then the assertions above follow directly from the results of Shioya and
Yamaguchi in \cite{SY}. To be more precise, when $Z$ is homeomorphic to
$S^{1}$, the irreducibility of $M$ implies that it is an euclidean, Nil or Sol
manifold (see remark before Theorem 0.5 and table 1 of \cite{SY}). When $Z$ is
homeomorphic to $\left[  0,1\right]  $, the irreducibility of $M$ implies that
it is a lens space or a euclidean, Nil, Sol or prism manifold (cf. \cite[table
1]{SY}).

Suppose from now on that $Z$ is not compact, i.e. that it is homeomorphic to
$\left[  0,\infty\right)  $ or $\left(  -\infty,\infty\right)  $. According to
\cite{Sal}, the metric of hyperbolic cone-manifolds $M_{i}$ on some small
neighborhoods of $\Sigma$ can be deformed to yield a sequence of complete
Riemannian manifolds $N_{i}$ (homeomorphic to $M$) with sectional curvature
not smaller than $-1$. Furthermore thanks to Lemma
(\ref{salgueiro fica proximo}), we can suppose that the sequence $\left(
N_{i},p\right)  $ converges (in the Hausdorff-Gromov sense) to $\left(
Z,z_{0}\right)  $.

Since $\sup\left\{  R_{i}\left(  \Sigma_{j}\right)  \;;\;i\in\mathbb{N}%
\;\text{,}\;\Sigma_{j}\in\Sigma_{0}\right\}  <\infty$, there exists $R>0$ such
that%
\[
B_{M_{i}}\left(  \Sigma_{j},R_{i}\left(  \Sigma_{j}\right)  \right)  \subset
B_{M_{i}}\left(  p,R/2\right)
\]
for all $i\in\mathbb{N}$ and for every component $\Sigma_{j}$ of $\Sigma_{0}$.
Consider a compact $K$ of $Z$ which is homeomorphic to $\left[  0,1\right]  $
and contains $B_{Z}\left(  z_{0},R\right)  \cup$ $\partial Z$.

Let $\lambda$ be the constant given by the Fibration Theorem
(\ref{teo fibracao}) and set $Y=K-U$, where $U$ is empty or a small open
neighborhood of $\partial Z$ according to whether or not the boundary of $Z$
is empty.

Without loss of generality, the Fibration Theorem (\ref{teo fibracao}) gives
us a sequence $\mathcal{N}_{i}$ of compact and connected three dimensional
submanifolds of $M$, a sequence $\tau_{i}>0$ that converges to zero and a
sequence of $\tau_{i}$-approximations $\mathfrak{p}_{i}:\mathcal{N}%
_{i}\rightarrow Y$ which induces a structure of locally trivial bundle on
$\mathcal{N}_{i}$ whose fibers are either two dimensional spheres or tori.

\begin{lemma}
\label{fibras sao toros}\textit{The fibers of }$\mathfrak{p}_{i}%
:\mathcal{N}_{i}\rightarrow Y$ \textit{are tori.}
\end{lemma}

\noindent\textbf{Proof of Lemma} (\ref{fibras sao toros})\textbf{\ :} Since
the fibers are either spheres or tori, it suffices to nule out the first
possibility. Looking for a contradiction, let us suppose that the fibers are
spheres. Given $z_{1}\in Z-\mathcal{B}$ and a positive constant $\delta\ll
diam_{Z}\left(  K\right)  $, consider a sequence of points $q_{i}\in
N_{i}-\Sigma$ converging (in the Hausdorff-Gromov sense) to $z_{1}$ and such
that $d_{HG}\left(  q_{i},z_{1}\right)  <\delta$ for all $i\in\mathbb{N}%
$. 

\bigskip

\noindent\textbf{Claim }: \textit{There exists }$i_{0}\in N$\textit{\ such
that, for every }$i>i_{0}$\textit{, we can find a homotopically nontrivial
loop (in }$M-\Sigma$\textit{) }$\gamma_{i}$\textit{\ based on }$q_{i}%
$\textit{\ whose length does not exceed }$\delta$\textit{. }

\noindent\textbf{Proof of Claim : }Consider the loops based on $q_{i}$ which
are constituted by two minimizing geodesic segments with same ends and having
equal lengths bounded by $\frac{\delta}{2}$. Note that these loops are always
homotopically nontrivial.

To say that a point $q_{i}$ is not a base point for a loop as above is
equivalent to say that its injectivity radius not less than $\frac{\delta}{2}%
$. This is a contradiction with the collapsing assumption and therefore the
claim follows.\hfill$\diamond$ 

\bigskip

Now fix $i>i_{0}$. Consider a boundary sphere of $\mathcal{N}_{i}$ associates
with an unbounded component of $Z-K$. By construction, this sphere separates
$M$ in two components: one of these components contains the singular link
$\Sigma_{0}\neq\emptyset$ (recall that $\Sigma_{1}\subset\Sigma_{0}$ by
hypothesis) and the other contains a homotopically nontrivial loop as in the
claim. Since $M-\Sigma$ is irreducible, this cannot happen and therefore the
fibers must be solid tori. This establishes the lemma.\hfill$\diamond$ 

As in the two dimensional case, the component(s) of $N_{i}-\mathcal{N}_{i}$
associated with unbounded components of $Z-Y$ are solid tori. When $Z$ is
homeomorphic to $\left[  0,\infty\right)  $, Shioya and Yamaguchi showed (see
\cite[Theorem 0.5 and Table 1]{SY}) that the component of $N_{i}%
-\mathcal{N}_{i}$ associated with the bounded component of $Z-Y$ is
homeomorphic to either a solid torus or to $M\ddot{o}\widetilde{\times}S^{1}$.
It follows that being irreducible $M$ is according homeomorphic to a lens space or a prism
manifold.\hfill
\end{proof}

\subsubsection{Case where $Z$ has dimension $0$}

This case was treated in \cite{SY}. For the convenience of the reader, we
state here their result:

\begin{theorem}
[collapsing - dimension 0]\label{effondrement en dim 0}Suppose the existence
of a point $p\in\Sigma_{1}$ such that the sequence $\left(  M_{i},p\right)  $
converges (in the Hausdorff-Gromov sense) to a zero dimensional pointed
Alexandrov space $\left(  Z,z_{0}\right)  $ and also that $\sup\left\{
R_{i}\left(  \Sigma_{1}\right)  \;;\;i\in\mathbb{N}\right\}  <\infty$. Then
$M$ is an euclidean, spherical or $Nil$ manifold.
\end{theorem}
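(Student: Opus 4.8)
The plan is to reduce the problem from hyperbolic cone-manifolds to smooth Riemannian manifolds and then invoke the collapsing theory of Shioya and Yamaguchi, exactly as in the two previous cases. First I would smooth out the singularities: by Salgueiro's deformation (\cite{Sal}), the cone metric of each $M_{i}$ can be modified in a small metric neighborhood of $\Sigma$ so as to produce a closed Riemannian $3$-manifold $N_{i}$, homeomorphic to $M$, whose sectional curvature is bounded below by $-1$. Lemma (\ref{salgueiro fica proximo}) then guarantees that the Hausdorff-Gromov distance between $M_{i}$ and $N_{i}$ can be made as small as we wish, so that the sequence $\left( N_{i},p\right) $ still converges in the Hausdorff-Gromov sense to the zero-dimensional pointed space $\left( Z,z_{0}\right) $.

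Since $M$ is closed, each $N_{i}$ is compact, and pointed convergence to a single point is equivalent to $diam\left( N_{i}\right) \rightarrow 0$. Thus I would have reduced the statement to the following purely Riemannian situation: a sequence of closed orientable $3$-manifolds, all homeomorphic to the irreducible manifold $M$, carrying metrics with sectional curvature bounded below by $-1$ and diameter tending to zero, that is, a sequence collapsing to a point under a uniform lower curvature bound.

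At this stage I would appeal directly to the classification of closed $3$-manifolds that collapse to a point with curvature bounded from below, due to Shioya and Yamaguchi (\cite{SY}), which itself rests on Perelman's stability theorem (\cite{Kap}). Because $M$ is irreducible, the reducible possibilities appearing in their list are excluded, and among the geometric manifolds the only ones compatible with a collapse to a point are the euclidean, spherical and $Nil$ manifolds (the $Sol$, $H^{2}\times \mathbb{R}$, $\widetilde{SL_{2}}$ and $H^{3}$ types collapse only onto limits of positive dimension). Hence $M$ is a euclidean, spherical or $Nil$ manifold.

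The genuine difficulty is concentrated entirely in this last step: the hard part will be the collapsing classification itself, in which one analyzes the local structure of the collapse through strainers and the induced local symmetry in order to recognize which geometry the limiting infinitesimal data force on $M$. The smoothing and the diameter reduction, by contrast, are routine consequences of the tools already assembled, so in practice the proof reduces to carefully checking that the hypotheses of \cite{SY} (a uniform lower curvature bound, collapse to a point, and irreducibility of $M$) are all verified and then quoting their result.
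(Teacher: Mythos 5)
Your proposal is correct and follows essentially the same route as the paper: for this case the paper offers no written argument at all, simply stating that ``this case was treated in \cite{SY}'', and your smoothing reduction via \cite{Sal} and Lemma (\ref{salgueiro fica proximo}) followed by the diameter-to-zero observation is exactly the implicit reduction the paper performs explicitly in the dimension $1$ and $2$ cases before quoting Shioya--Yamaguchi. The only substantive content beyond the citation is the correct identification of which geometries survive collapse to a point under a lower curvature bound together with irreducibility, and you handle that accurately.
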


\section{Applications}

\subsection{Proof of Corollary (\ref{corolario da conjectura})}

\begin{proof}
Let $M_{\alpha_{i}}$ be a convergent sequence as in the statement of the
corollary. Since $M$ is a hyperbolic manifold, it is not Seifert fibered or a
$Sol$ manifold. Therefore, in the presence of the hypothesis
(\ref{controlecomprimento2}), is a consequence of the Theorem
(\ref{teo principal introducao}) that the sequence $M_{\alpha_{i}}$ cannot
collapses.\hfill
\end{proof}

\subsection{Volume of Representations}

\qquad Recall that $M$ denotes a closed, orientable (not necessarily
irreducible) differential manifold of dimension $3$ and that $\Sigma
=\Sigma_{1}\sqcup\ldots\sqcup\Sigma_{k}$ denotes an embedded link in $M$. Let
us recall the definition of the volume of representations and some of its
properties. For the interested reader, we refer to \cite{Fra} and \cite{Dun}
for details.

Let $U=U_{1}\sqcup\ldots\sqcup U_{k}$ be an open neighborhood of $\Sigma$,
where each $U_{i}$ is a neighborhood of $\Sigma_{i}$ homeomorphic to a solid
torus. Consider product structures $\left(  T^{2}\times\lbrack0,\infty
)\right)  _{i}$ on the open sets $\mathcal{U}_{i}=U_{i}-\Sigma$ called
cuspidal ends of $M-\Sigma$. We will denote the universal cover of $M-\Sigma$
by $\pi:\widetilde{M-\Sigma}\rightarrow M-\Sigma$. The connected components of
$\pi^{-1}\left(  \mathcal{U}_{i}\right)  $ are denoted by $\mathcal{V}%
_{ij}\thickapprox\left(  \pi^{-1}\left(  T^{2}\right)  \times\lbrack
0,\infty)\right)  _{ij}$.

\begin{definition}
Given a representation $\rho:\pi_{1}(M-\Sigma)\rightarrow PSL_{2}\left(
\mathbb{C}\right)  $, a positive piecewise differentiable map $d:\Sigma
\rightarrow M-\mathbb{H}^{3}$ is called a pseudo-developing map for $\rho$ if
it verify:

\begin{itemize}
\item[i.] (equivariance) $d\circ\gamma=\rho\left(  \gamma\right)  \circ d$,
for all $\gamma\in\pi_{1}(M-\Sigma)$

\item[ii.] There is $r_{o}\in\lbrack0,\infty)$ such that, for every $\left(
p,r_{0}\right)  _{ij}\in\mathcal{V}_{ij}$, the curve%
\[
t\in\lbrack r_{o},\infty)\longmapsto d\left(  \left(  p,t\right)
_{ij}\right)  \in\mathbb{H}^{3}%
\]
is a geodesic of $\mathbb{H}^{3}$ parameterized by arc length. In addition,
there are points $\zeta_{ij}\in\partial\mathbb{H}^{3}$ (depending only on the
indexes $i$ and $j$), such that
\[
\lim_{t\rightarrow\infty}d\left(  \left(  p,t\right)  _{ij}\right)
=\zeta_{ij}\in\partial\mathbb{H}^{3}\text{.}%
\]

\end{itemize}

Given a pseudo-develloping map for $\rho$, let $\omega_{d}$ be the unique
differential form of degree $3$ on $M-\Sigma$ verifying%
\[
\left.  \pi\right.  ^{\ast}\omega_{d}=d^{\ast}V_{\mathbb{H}^{3}}\text{,}%
\]
where $V_{\mathbb{H}^{3}}$ denotes the volume form of $\mathbb{H}^{3}$. The
volume of $\rho$ is defined by%
\[
Vol_{M-\Sigma}\left(  \rho\right)  =%
{\displaystyle\int\nolimits_{M-\Sigma}}
\omega_{d}=%
{\displaystyle\int\nolimits_{R}}
d^{\ast}V_{\mathbb{H}^{3}}<\infty\text{ ,}%
\]
where $R\subset\widetilde{M-\Sigma}$ is a fundamental region for the action of
$\pi_{1}\left(  M-\Sigma\right)  $ on $\widetilde{M-\Sigma}$.
\end{definition}

Given a representation $\rho:\pi_{1}(M-\Sigma)\rightarrow PSL_{2}\left(
\mathbb{C}\right)  $, note that a pseudo-developing map for $\rho$ always
exists. Moreover, the volume of $\rho$ does not depend on the chosen
pseudo-developing map. The following result presents some properties of the
volume function%

\[
Vol_{M-\Sigma}:R(M-\Sigma)\longrightarrow\mathbb{R}^{+}\text{,}%
\]
where $R(M-\Sigma)$ denotes the set of all representations of $\pi
_{1}(M-\Sigma)$ in $PSL_{2}\left(  \mathbb{C}\right)  $ with its canonical
structure of algebraic variety.

\begin{proposition}
\label{propriedades vol (cusps toroidales)}The function $Vol_{M-\Sigma}$ is
continuous and verifies the following properties:

\begin{itemize}
\item[i.] $Vol_{M-\Sigma}\left(  \rho\right)  =0$, for any reducible
representation $\rho\in R(M-\Sigma)$. Recall that a representation $\rho\in
R(M-\Sigma)$ is called reducible if the group $\rho(\pi_{1}\left(
M-\Sigma\right)  )\subset PSL_{2}(\mathbb{C})$ fixes a point in $\partial
H^{3}$,

\item[ii.] The function $Vol_{M-\Sigma}$ factorizes to the quotient $R\left(
M-\Sigma\right)  /PSL_{2}\left(  \mathbb{C}\right)  $. That is,%
\[
Vol_{M-\Sigma}(\rho)=Vol_{M-\Sigma}(A.\rho)\text{,}%
\]
for every representation $\rho\in R(M-\Sigma)$ and every element $A\in
PSL_{2}(\mathbb{C})$.

\item[iii.] Consider a closed, orientable, differentiable manifold $N$ of
dimension $3$. Let $\Omega$ be an embedded link in $N$. If $f:\left(
N,\Omega\right)  \rightarrow\left(  M,\Sigma\right)  $ is a diffeomorphism,
then%
\[
Vol_{M-\Sigma}(\rho)=Vol_{N-\Omega}\left(  \rho\circ f_{\ast}\right)  \text{,}%
\]
for every representation $\rho\in R\left(  M-\Sigma\right)  $.
\end{itemize}
\end{proposition}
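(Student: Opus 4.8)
The plan is to verify each of the three listed properties directly from the definition of the volume of a representation via pseudo-developing maps, treating continuity separately at the end as the substantive analytic point.

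For property (i), suppose $\rho$ is reducible, so $\rho(\pi_1(M-\Sigma))$ fixes a point $\zeta \in \partial \mathbb{H}^3$. The idea is to exploit the freedom in choosing the pseudo-developing map. I would attempt to construct a pseudo-developing map $d$ whose image lies entirely on the geodesics emanating from the fixed point $\zeta$, or more precisely whose image is contained in a lower-dimensional (equivariant) subset of $\mathbb{H}^3$. Concretely, using coordinates where $\zeta = \infty$ in the upper half-space model, the stabilizer of $\zeta$ consists of upper-triangular matrices acting as similarities of $\mathbb{R}^2 = \partial\mathbb{H}^3 \setminus\{\infty\}$; one can build $d$ equivariantly so that it factors through a map into a horosphere-foliated region in a way that makes $d^*V_{\mathbb{H}^3}$ degenerate. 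Then $\omega_d = 0$ as a $3$-form on $M-\Sigma$, forcing $Vol_{M-\Sigma}(\rho) = \int_{M-\Sigma}\omega_d = 0$. The point to check is that such a degenerate equivariant $d$ can be arranged to satisfy the geodesic/cuspidal condition (ii) of the definition.

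For property (ii), given $A \in PSL_2(\mathbb{C})$ and a pseudo-developing map $d$ for $\rho$, I would verify that $A \circ d$ is a pseudo-developing map for $A.\rho = A\rho A^{-1}$. Equivariance follows since $(A\circ d)\circ\gamma = A\circ \rho(\gamma)\circ d = (A\rho(\gamma)A^{-1})\circ(A\circ d)$, and the cuspidal condition is preserved because $A$ is an isometry of $\mathbb{H}^3$ (it sends geodesics to geodesics and has a well-defined extension to $\partial\mathbb{H}^3$ sending $\zeta_{ij}$ to $A\zeta_{ij}$). Since $A$ is an orientation-preserving isometry, $(A\circ d)^*V_{\mathbb{H}^3} = d^*(A^*V_{\mathbb{H}^3}) = d^*V_{\mathbb{H}^3}$, so $\omega_{A\circ d} = \omega_d$ and the integrals agree. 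Property (iii) is similar and purely formal: a diffeomorphism $f:(N,\Omega)\to(M,\Sigma)$ induces $f_*:\pi_1(N-\Omega)\to\pi_1(M-\Sigma)$, and if $d$ is a pseudo-developing map for $\rho$ then $d\circ\tilde{f}$ (where $\tilde f$ is a lift of $f$ to universal covers) is one for $\rho\circ f_*$; since $f$ is a diffeomorphism, pulling back the integrand under $\tilde f$ changes variables without changing the value of the integral, giving the claimed equality.

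The hard part will be continuity. The main obstacle is that the volume is defined as an integral over the noncompact manifold $M-\Sigma$, and one must show that the integrand depends continuously on $\rho$ while controlling the cuspidal ends uniformly. The strategy I would follow is to fix a truncation: write $M-\Sigma$ as a compact core $C$ together with the cuspidal ends $\mathcal{U}_i$, and choose pseudo-developing maps that vary continuously (in fact smoothly) with $\rho$ on a neighborhood of a given representation $\rho_0$ — this uses that on the ends the developing map is pinned down by the geodesics toward $\zeta_{ij}$, and these endpoints (fixed points of $\rho$ on the peripheral subgroups) move continuously with $\rho$. Over the compact core the integrand $d^*V_{\mathbb{H}^3}$ then varies continuously and the integral over $C$ is continuous by dominated convergence. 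The delicate estimate is that the contribution of the integral over the ends is uniformly small: because $d$ sends each end to a neighborhood of a cusp, $d^*V_{\mathbb{H}^3}$ decays exponentially along the cuspidal direction, and one shows this decay is uniform in $\rho$ near $\rho_0$. Combining the continuous dependence over $C$ with the uniformly negligible tail yields continuity of $Vol_{M-\Sigma}$.
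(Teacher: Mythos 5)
The paper offers no proof of this proposition to compare yours against: it is stated as a recollection of known properties of the volume of representations, and the reader is referred to \cite{Fra} and \cite{Dun} for the arguments. Judged on its own terms, your treatment of (ii) and (iii) is correct and complete: $A\circ d$ (resp.\ $d\circ\tilde f$ for a lift $\tilde f$ of $f$) is a pseudo-developing map for $A\rho A^{-1}$ (resp.\ $\rho\circ f_{\ast}$), and the pulled-back volume form is unchanged because $A$ is an orientation-preserving isometry (resp.\ because $\tilde f$ is an orientation-preserving change of variables). These two items really are formal.

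The two substantive items have gaps. For (i), the construction you propose does not exist in general: the full stabilizer of a point $\zeta\in\partial\mathbb{H}^{3}$ acts transitively on $\mathbb{H}^{3}$ (with $\zeta=\infty$ in the upper half-space model it is the affine group of $\mathbb{C}$), so there need be no proper $\rho$-invariant two-dimensional subset into which an equivariant map could be forced to land, and a single horosphere centred at $\zeta$ is preserved only by the unimodular part of the stabilizer. The standard route, which is what \cite{Fra} and \cite{Dun} actually carry out, is different: the volume form admits a primitive $\beta$ (for $\zeta=\infty$ one may take $\beta=dx\wedge dy/2z^{2}$) invariant under the whole stabilizer of $\zeta$, so $\omega_{d}=d\left(d^{\ast}\beta\right)$ is exact on $M-\Sigma$; Stokes then reduces the volume to boundary integrals over tori pushed into the cusps, and showing that these boundary terms vanish requires genuine estimates on the behaviour of $d$ near the ideal points $\zeta_{ij}$ (naive size counting gives a bounded, not vanishing, contribution when $\zeta_{ij}\neq\zeta$). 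For continuity, your truncation scheme is the right skeleton, but the claim that the contribution of the ends is uniformly small in $\rho$ near $\rho_{0}$ is precisely the hard point: the geometry of the image of an end depends on whether the peripheral holonomy is parabolic, loxodromic or trivial, and this type can jump in the limit, so the exponential-decay heuristic must be replaced by the uniform cusp estimates of \cite{Fra}. In short, (ii) and (iii) are proved, while (i) and the continuity remain sketches that presuppose the nontrivial analytic content of the cited references.
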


The following propositions recall two essential properties of the volume
function. The first concerns the operation of Dehn filling. The second one
establishes the relationship between the Riemannian volume of a hyperbolic
cone-manifold and the volume of its holonomy.

\begin{proposition}
\label{egalite du vol de rep apres dehn filling}Given a compact, orientable,
differentiable $3$-manifold $W$ with tori boundary, let $M$ be a closed
$3$-manifold obtained from $W$ by Dehn filling on its tori boundary
components. For every representation $\rho\in R(M)$ we have that%
\[
Vol_{M}(\rho)=Vol_{M-\Sigma}\left(  \rho\circ i_{\ast}\right)  \text{,}%
\]
where $\Sigma$ denotes the union of souls cores of the tori added to $W$ and
\[
i_{\ast}:\pi_{1}\left(  M-\Sigma\right)  \rightarrow\pi_{1}\left(  M\right)
\]
is the canonical map induced by the inclusion $i:M-\Sigma\hookrightarrow M$.
\end{proposition}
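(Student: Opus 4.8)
The plan is to compute both sides with \emph{the same} developing data over the part of $M$ coming from $W$, and then to show that the contribution of the filling solid tori to $Vol_{M}(\rho)$ agrees with the contribution of the corresponding cuspidal ends to $Vol_{M-\Sigma}(\rho\circ i_{\ast})$. Write $M=W\cup V$, where $V=V_{1}\sqcup\ldots\sqcup V_{k}$ is the union of the solid tori glued to $\partial W$ and $\Sigma_{j}$ is the core of $V_{j}$, so that $M-\Sigma=W\cup(V-\Sigma)$ and each $V_{j}-\Sigma_{j}$ is precisely a cuspidal end $\mathcal{U}_{j}$. The first observation I would record is that the meridian $\mu_{j}$ of $V_{j}$ bounds a disc in $V_{j}$ and is therefore trivial in $\pi_{1}(M)$; hence $\rho\circ i_{\ast}$ sends the peripheral group $\pi_{1}(\partial V_{j})$ onto the cyclic group generated by $\rho(i_{\ast}\lambda_{j})$, which is elementary and thus fixes some point $\zeta_{j}\in\partial\mathbb{H}^{3}$. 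This is exactly the data needed to develop geodesically along $\mathcal{U}_{j}$, i.e. to satisfy condition (ii) of the definition of a pseudo-developing map.

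Next I would fix a $\rho$-equivariant developing map $D:\widetilde{M}\to\mathbb{H}^{3}$ (it exists because $\mathbb{H}^{3}$ is contractible), so that $Vol_{M}(\rho)=\int_{M}\omega_{D}$ with $\pi^{\ast}\omega_{D}=D^{\ast}V_{\mathbb{H}^{3}}$, and lift the inclusion to an $i_{\ast}$-equivariant map $\tilde{\imath}:\widetilde{M-\Sigma}\to\widetilde{M}$. Setting $d_{0}=D\circ\tilde{\imath}$ gives a $(\rho\circ i_{\ast})$-equivariant map. On $\widetilde{W}$ I keep $d=d_{0}$, while on each end $\mathcal{U}_{j}$ I modify $d_{0}$ inside a collar so that, for large parameter, $d$ runs geodesically to $\zeta_{j}$; equivariance survives because $\langle\rho(i_{\ast}\lambda_{j})\rangle$ fixes $\zeta_{j}$. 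The resulting $d$ is a genuine pseudo-developing map for $\rho\circ i_{\ast}$, so $Vol_{M-\Sigma}(\rho\circ i_{\ast})=\int_{M-\Sigma}\omega_{d}$. Since $d$ coincides with $D$ over $W$, the forms agree there and $\int_{W}\omega_{d}=\int_{W}\omega_{D}$, whence
\[
Vol_{M-\Sigma}(\rho\circ i_{\ast})-Vol_{M}(\rho)=\sum_{j}\left(\int_{V_{j}-\Sigma_{j}}\omega_{d}-\int_{V_{j}}\omega_{D}\right).
\]

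The heart of the argument is to show that each summand vanishes. Because $\Sigma_{j}$ has measure zero and the volume near the cusp is finite, I would extend $d$ to a continuous map $\hat{d}:\widetilde{V_{j}}\to\overline{\mathbb{H}^{3}}$ with $\hat{d}(\widetilde{\Sigma_{j}})=\zeta_{j}$, so that $\int_{V_{j}-\Sigma_{j}}\omega_{d}=\int_{V_{j}}\omega_{\hat{d}}$. Now $D$ and $\hat{d}$ are $\rho$-equivariant maps $\widetilde{V_{j}}\to\overline{\mathbb{H}^{3}}$ that agree on $\partial V_{j}$; the geodesic homotopy between them (a ray towards $\zeta_{j}$ wherever $\hat{d}$ is ideal) is again $\rho$-equivariant, and applying Stokes to the swept region $V_{j}\times[0,1]$ with the closed form $V_{\mathbb{H}^{3}}$ reduces the difference $\int_{V_{j}}\omega_{\hat{d}}-\int_{V_{j}}\omega_{D}$ to an integral over $\partial V_{j}\times[0,1]$. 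Since $D=\hat{d}$ on $\partial V_{j}$ this homotopy is constant there, the boundary term is zero, and the summand vanishes. This gives $Vol_{M}(\rho)=Vol_{M-\Sigma}(\rho\circ i_{\ast})$.

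The step I expect to be the main obstacle is the last one: making the Stokes/homotopy computation rigorous at the cusp, that is, justifying the integrability of the swept-out volume and the vanishing of the boundary contribution as the geodesic homotopy escapes to the ideal point $\zeta_{j}$. This is precisely the type of ``independence of the (pseudo-)developing map'' estimate carried out in \cite{Fra} and \cite{Dun}, on which I would rely, together with the already recorded fact that $Vol_{M-\Sigma}$ does not depend on the chosen pseudo-developing map.
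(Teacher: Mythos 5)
The paper does not actually prove this proposition: it is stated as a recalled fact, with the reader referred to \cite{Fra} and \cite{Dun} at the start of the subsection, so there is no in-text argument to compare yours against. Judged on its own terms, your proof is the standard one from those references and is sound in outline. The three structural points are all correct: (a) since the meridian of each filling torus $V_{j}$ bounds a disc in $V_{j}$, the image of the peripheral subgroup under $\rho\circ i_{\ast}$ is cyclic, and every cyclic subgroup of $PSL_{2}(\mathbb{C})$ fixes a point of $\partial\mathbb{H}^{3}$, which is exactly the ideal endpoint $\zeta_{j}$ needed for condition (ii) of the definition of a pseudo-developing map; (b) building $d$ as $D\circ\tilde{\imath}$ away from the cores and modifying it only deep inside each $V_{j}$ makes the two volume forms agree over $W$ and localizes the difference to the filling tori; (c) the cone/Stokes argument with the geodesic homotopy from $D$ to $\hat{d}$ kills each local difference, the lateral boundary term vanishing because the two maps agree near $\partial V_{j}$.

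The one genuine analytic issue is the one you flag yourself: $V_{\mathbb{H}^{3}}$ does not extend to $\overline{\mathbb{H}^{3}}$, so Stokes must be applied on the complement of a shrinking neighborhood of $\widetilde{\Sigma_{j}}\times\{1\}$, and one must check both that the swept-out volume is integrable and that the extra boundary contribution tends to zero as the homotopy escapes to $\zeta_{j}$. This is exactly the ``straightening towards an ideal vertex'' estimate carried out in \cite{Fra} and in the appendix of \cite{Dun}, and deferring to it is no worse than what the paper itself does, since the paper defers the entire proposition to the same sources. I would only suggest making explicit that the finiteness of $\int_{V_{j}-\Sigma_{j}}\omega_{d}$ is part of what those references establish (it is asserted, not proved, in the paper's definition of $Vol_{M-\Sigma}$), so that the identity $\int_{V_{j}-\Sigma_{j}}\omega_{d}=\int_{V_{j}}\omega_{\hat{d}}$ is not circular.
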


\begin{proposition}
\label{vol rep = vol riem de var conique}Let $\mathcal{M}$ be a hyperbolic
cone-manifold with topological type $\left(  M,\Sigma\right)  $. If
$\rho_{\mathcal{M}}\in R(M-\Sigma)$ is a holonomy representation for
$\mathcal{M}$, then%
\[
Vol(M-\Sigma)=Vol_{M-\Sigma}(\rho_{\mathcal{M}})\text{,}%
\]
where $Vol(M-\Sigma)$ is the Riemannian volume of $M-\Sigma$.
\end{proposition}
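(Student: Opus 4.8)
The plan is to compare the volume of the holonomy $\rho_{\mathcal{M}}$, computed from an arbitrary pseudo-developing map, with the genuine developing map of the hyperbolic cone-structure, and to show that the two integrals coincide by a transgression (Stokes) argument whose boundary contribution concentrates on the singular locus and vanishes in the limit. Let $D:\widetilde{M-\Sigma}\to\mathbb{H}^{3}$ denote the developing map of the hyperbolic structure on $M-\Sigma$; it is a local isometry and is equivariant, $D\circ\gamma=\rho_{\mathcal{M}}(\gamma)\circ D$. Because $D$ is a local isometry, the form $\omega_{D}$ defined by $\pi^{\ast}\omega_{D}=D^{\ast}V_{\mathbb{H}^{3}}$ is exactly the Riemannian volume form of $M-\Sigma$, so $\int_{M-\Sigma}\omega_{D}=Vol(M-\Sigma)$; this number is finite since $\mathcal{M}$ is compact and $\Sigma$ has measure zero. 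Note, however, that $D$ itself is \emph{not} a pseudo-developing map: near a component $\Sigma_{i}$ the meridian holonomy is an elliptic (or loxodromic) isometry whose axis $A$ develops from the core, so the radial curves converge to finite points on $A$ rather than to a single ideal point, violating condition (ii) of the definition. The goal is therefore to prove $\int_{M-\Sigma}\omega_{d}=\int_{M-\Sigma}\omega_{D}$ for a pseudo-developing map $d$ of $\rho_{\mathcal{M}}$; since the left-hand side is by definition $Vol_{M-\Sigma}(\rho_{\mathcal{M}})$ and is independent of the chosen $d$, this finishes the proof.

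First I would fix a pseudo-developing map $d:\widetilde{M-\Sigma}\to\mathbb{H}^{3}$ for $\rho_{\mathcal{M}}$ which, cusp by cusp, sends each radial curve to a geodesic ray ending at a chosen ideal point $\zeta_{ij}$; for definiteness I take $\zeta_{ij}$ to be an endpoint of the axis $A$ of the meridian holonomy attached to $\Sigma_{i}$. Both $D$ and $d$ are $\rho_{\mathcal{M}}$-equivariant, and since $\mathbb{H}^{3}$ is uniquely geodesic the straight-line homotopy $H(x,s)$ joining $D(x)$ to $d(x)$ along the geodesic segment between them is well defined and again equivariant. As $V_{\mathbb{H}^{3}}$ is closed, the transgression $\eta=\int_{0}^{1}\iota_{\partial_{s}}H^{\ast}V_{\mathbb{H}^{3}}\,ds$ is an equivariant $2$-form on $\widetilde{M-\Sigma}$ with $d\eta=d^{\ast}V_{\mathbb{H}^{3}}-D^{\ast}V_{\mathbb{H}^{3}}$, and by equivariance it descends to a $2$-form $\bar{\eta}$ on $M-\Sigma$ with $d\bar{\eta}=\omega_{d}-\omega_{D}$.

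Next I would apply Stokes' theorem on the exhaustion of $M-\Sigma$ by the compact manifolds $M^{\varepsilon}=(M-\Sigma)-B_{\mathcal{M}}(\Sigma,\varepsilon)$, whose boundary is the disjoint union of the tubes $T_{i}^{\varepsilon}=\{\,d_{\mathcal{M}}(\cdot,\Sigma_{i})=\varepsilon\,\}$, obtaining
\[
\int_{M^{\varepsilon}}(\omega_{d}-\omega_{D})=\int_{\partial M^{\varepsilon}}\bar{\eta}=-\sum_{i}\int_{T_{i}^{\varepsilon}}\bar{\eta}.
\]
The heart of the argument is the estimate $\int_{T_{i}^{\varepsilon}}\bar{\eta}\to0$ as $\varepsilon\to0$, which rests on two facts. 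On one hand, in the cone coordinates $ds^{2}=dr^{2}+\sinh^{2}r\,d\theta^{2}+\cosh^{2}r\,dz^{2}$ around $\Sigma_{i}$ the induced area of $T_{i}^{\varepsilon}$ equals $\sinh\varepsilon\,\cosh\varepsilon\cdot\alpha_{i}\,\mathcal{L}_{\mathcal{M}}(\Sigma_{i})$, which tends to $0$. On the other hand, $\bar{\eta}$ has bounded pointwise hyperbolic norm on a punctured neighbourhood of $\Sigma$: with $\zeta_{ij}$ chosen on $\partial A$, the points $D(x)$ and $d(x)$ remain at bounded hyperbolic distance as $x\to\Sigma_{i}$, the connecting segments have bounded length, and both maps have controlled derivatives there, so the integrand defining $\eta$ is bounded. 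Combining the two, $\bigl|\int_{T_{i}^{\varepsilon}}\bar{\eta}\bigr|\le\|\bar{\eta}\|_{\infty}\cdot\mathrm{Area}(T_{i}^{\varepsilon})\to0$, and letting $\varepsilon\to0$ in the Stokes identity forces $\int_{M-\Sigma}(\omega_{d}-\omega_{D})=0$, i.e. $Vol_{M-\Sigma}(\rho_{\mathcal{M}})=\int_{M-\Sigma}\omega_{D}=Vol(M-\Sigma)$.

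I expect the genuine obstacle to be exactly the boundedness of $\bar{\eta}$ near $\Sigma$, that is, gaining uniform geometric control on the pseudo-developing map and on the straight-line homotopy as one approaches the cone locus; once the ideal points $\zeta_{ij}$ are placed on the axes of the meridian holonomies this reduces to a routine, if slightly delicate, estimate, and the remainder is the standard transgression computation. Everything else — the equivariance, the transgression identity, the finiteness of $Vol(M-\Sigma)$, and the independence of $Vol_{M-\Sigma}(\rho)$ from the chosen pseudo-developing map — is already at hand.
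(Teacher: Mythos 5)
First, a remark on the comparison you asked for: the paper does not prove this proposition at all --- it is explicitly ``recalled'' from the literature, with \cite{Fra} and \cite{Dun} cited for details --- so there is no in-house argument to measure yours against. Your overall strategy (compare the genuine developing map $D$ of the cone structure with a pseudo-developing map $d$ for $\rho_{\mathcal{M}}$ via the transgression of the straight-line homotopy, then kill the boundary contribution on shrinking tubes about $\Sigma$ by Stokes) is exactly the strategy of those references, and most of the intermediate steps are sound: the equivariance of the homotopy, the identity $d\bar{\eta}=\omega_{d}-\omega_{D}$, the identification $\int_{M-\Sigma}\omega_{D}=Vol(M-\Sigma)$, the choice of $\zeta_{ij}$ on the axis of the peripheral holonomy, and the area asymptotics $\mathrm{Area}(T_{i}^{\varepsilon})=\sinh\varepsilon\cosh\varepsilon\,\alpha_{i}\mathcal{L}_{\mathcal{M}}(\Sigma_{i})\to 0$.

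The gap is in the one estimate you yourself identify as the crux, and the justification you give for it is not merely incomplete but self-contradictory. Early on you correctly note that $D(x)$ converges to finite points of the axis $A$ as $x\to\Sigma_{i}$, while condition (ii) of the definition forces $d\left(\left(p,t\right)_{ij}\right)\to\zeta_{ij}\in\partial\mathbb{H}^{3}$ as $t\to\infty$; yet the key step asserts that ``$D(x)$ and $d(x)$ remain at bounded hyperbolic distance as $x\to\Sigma_{i}$'' and that the connecting segments have bounded length. A sequence converging to an ideal point is at unbounded distance from any bounded set, so in fact $d_{\mathbb{H}^{3}}\left(D(x),d(x)\right)\to\infty$, the straight-line segments become arbitrarily long, and the pointwise norm of $\bar{\eta}$ is \emph{not} bounded near $\Sigma$. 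The inequality $\left\vert\int_{T_{i}^{\varepsilon}}\bar{\eta}\right\vert\leq\left\Vert\bar{\eta}\right\Vert_{\infty}\cdot\mathrm{Area}(T_{i}^{\varepsilon})$ therefore has an uncontrolled right-hand side and proves nothing. The estimate is true, but for a different reason: $\int_{T_{i}^{\varepsilon}}\bar{\eta}$ is the signed hyperbolic volume swept out by the homotopy over $T_{i}^{\varepsilon}$, and as $\varepsilon\to0$ this track collapses onto the two-dimensional fan of geodesics joining $A$ to $\zeta_{ij}$; quantitatively one bounds it by cone-volume estimates (the straight cone from a point of $\overline{\mathbb{H}^{3}}$, finite or ideal, over a surface has volume controlled by the area of that surface, and the cone from $\zeta_{ij}$ over $d(T_{i}^{\varepsilon})$ is the tail of a finite-volume cone, hence small for large $t$). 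That is precisely the content of the lemmas in \cite{Fra} and \cite{Dun} that your sketch would need to import or reprove; as written, the ``routine, if slightly delicate, estimate'' is the whole theorem, and the reason offered for it is false.
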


Suppose from now on that $M$ is irreducible. Given a sequence $M_{i}$ of
hyperbolic cone-manifolds with topological type $\left(  M,\Sigma\right)  $,
let $\rho_{i}$ be an associated sequence of holonomy representations. Suppose
also that the sequence $\rho_{i}$ converges to a representation $\rho_{\infty
}\in R\left(  M-\Sigma\right)  $.

As a consequence of the above propositions, we have two applications of the
Theorem (\ref{teo principal introducao}) concerning the volume of holonomy
representations of a hyperbolic cone-manifold.

\begin{corollary}
If $Vol_{M-\Sigma}\left(  \rho_{\infty}\right)  =0$ (in particular, if
$\rho_{\infty}$ is reducible), then $M$ is Seifert fibered or a Sol manifold.
\end{corollary}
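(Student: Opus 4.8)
The plan is to deduce the corollary directly from Theorem~\ref{teo principal introducao} by verifying its two hypotheses for the sequence $M_i$: a uniform upper bound on the lengths of the components of $\Sigma$, and the collapse of the sequence. The observation that drives everything is Proposition~\ref{vol rep = vol riem de var conique}, which identifies the volume of the holonomy with the Riemannian volume of the non-singular part, $Vol_{M-\Sigma}(\rho_i)=Vol(M_i-\Sigma)$. Since $Vol_{M-\Sigma}$ is continuous (Proposition~\ref{propriedades vol (cusps toroidales)}) and $\rho_i\to\rho_\infty$, I obtain
\[
Vol(M_i-\Sigma)=Vol_{M-\Sigma}(\rho_i)\longrightarrow Vol_{M-\Sigma}(\rho_\infty)=0,
\]
so the Riemannian volumes of the cone-manifolds $M_i$ tend to zero. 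When $\rho_\infty$ is reducible this hypothesis is automatic from item~i of Proposition~\ref{propriedades vol (cusps toroidales)}, which is why that case is singled out as a special instance.

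First I would establish the length bound. Because the holonomies $\rho_i$ converge, for each component $\Sigma_j$ the loxodromic element $\rho_i(\ell_j)$ associated to the core curve $\ell_j$ of $\Sigma_j$ converges in $PSL_2(\mathbb{C})$. The complex translation length is a continuous function of the element (via its trace), and its real part is precisely $\mathcal{L}_{M_i}(\Sigma_j)$; hence the lengths $\mathcal{L}_{M_i}(\Sigma_j)$ are uniformly bounded. This is exactly the remark recorded in the introduction, namely that the length hypothesis holds automatically once the holonomies converge, so condition~(\ref{controledocomprimento}) of Theorem~\ref{teo principal introducao} is satisfied.

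Next I would show that the vanishing of the limiting volume forces collapse, arguing by contraposition. Suppose $M_i$ does not collapse. Then, after passing to a subsequence, there are points $p_i\in M-\Sigma$ and a constant $r_0>0$ with $r_{inj}^{M_i-\Sigma}(p_i)\ge r_0$ for every $i$. Since $M_i-\Sigma$ is hyperbolic, the metric ball $B_{M_i}(p_i,r_0)$ is then embedded and isometric to a ball of radius $r_0$ in $\mathbb{H}^3$, so it has volume $v_0=Vol_{\mathbb{H}^3}(B(r_0))>0$, a constant independent of $i$. Consequently $Vol(M_i-\Sigma)\ge v_0>0$ along the subsequence, contradicting $Vol(M_i-\Sigma)\to 0$. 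Therefore the sequence $M_i$ collapses.

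With $M$ closed, orientable and irreducible by assumption, the lengths of $\Sigma$ uniformly bounded, and $M_i$ collapsing, Theorem~\ref{teo principal introducao} applies and yields that $M$ is Seifert fibered or a $Sol$ manifold. The only genuinely delicate point is the passage from convergence of $\rho_i$ to the length bound: one must know that the core curve of $\Sigma_j$ is carried to a loxodromic isometry whose real translation length equals $\mathcal{L}_{M_i}(\Sigma_j)$, so that continuity of translation length in $PSL_2(\mathbb{C})$ gives the uniform control. The remaining steps are formal consequences of the volume propositions and of the definition of collapse.
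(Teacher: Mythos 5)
Your proposal is correct and follows essentially the same route as the paper: identify $Vol(M_i)$ with $Vol_{M-\Sigma}(\rho_i)$ via Proposition~\ref{vol rep = vol riem de var conique}, use continuity of $Vol_{M-\Sigma}$ to get $Vol(M_i)\to 0$, deduce collapse, invoke the convergence of the holonomies for the length bound~(\ref{controledocomprimento}), and apply Theorem~\ref{teo principal introducao}. The only difference is that you spell out two steps the paper merely asserts (the embedded-hyperbolic-ball argument showing vanishing volume forces collapse, and the translation-length argument for the length bound), which is a welcome elaboration rather than a deviation.
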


\begin{proof}
According to the proposition (\ref{vol rep = vol riem de var conique}), we
have%
\[
Vol\left(  M_{i}\right)  =Vol_{M-\Sigma}\left(  \rho_{i}\right)  \text{,}%
\]
for every $i\in\mathbb{N}$. Since the function $Vol_{M-\Sigma}$ is continuous
we also have%
\[
\lim\limits_{i\rightarrow\infty}Vol\left(  M_{i}\right)  =\lim
\limits_{i\rightarrow\infty}Vol_{M-\Sigma}\left(  \rho_{i}\right)
=Vol_{M-\Sigma}\left(  \rho_{\infty}\right)  =0\text{.}%
\]
As a consequence, the sequence $M_{i}$ necessarily collapses.

Recall that the convergence hypothesis implies the condition
(\ref{controledocomprimento}). The result is now a direct consequence of the
Theorem (\ref{teo principal introducao}).\hfill
\end{proof}

\begin{corollary}
Denote by $\alpha_{ij}$ the cone angle of the component $\Sigma_{j}$ in
$M_{i}$. If $M$ has zero simplicial volume and%
\[
\lim\limits_{i\rightarrow\infty}\alpha_{ij}=2\pi\qquad j\in\left\{
1,\ldots,k\right\}  \text{,}%
\]
then $M$ is Seifert fibered or a Sol manifold.
\end{corollary}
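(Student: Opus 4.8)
The plan is to deduce this from the preceding corollary by showing that the limiting representation satisfies $Vol_{M-\Sigma}(\rho_\infty)=0$; once this is known the conclusion is immediate, since that corollary assumes exactly this vanishing. The whole point is therefore to exploit the two remaining hypotheses — the degeneration of the cone angles toward $2\pi$ and the vanishing of the simplicial volume of $M$ — to force $Vol_{M-\Sigma}(\rho_\infty)=0$.

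First I would analyze the peripheral holonomy. For each component $\Sigma_j$ the meridian $m_j\in\pi_1(M-\Sigma)$ is sent by $\rho_i$ to an elliptic isometry whose rotation angle is the cone angle $\alpha_{ij}$; its eigenvalues are $e^{\pm i\alpha_{ij}/2}$, which tend to $-1$ as $\alpha_{ij}\to 2\pi$. Using the commutation of $m_j$ with the longitude $l_j$ (together they generate the peripheral $\mathbb{Z}^2$) to keep the rotation axis under control, I would pass to the limit in $\rho_i(m_j)\to\rho_\infty(m_j)$ and conclude that $\rho_\infty(m_j)=\mathrm{Id}$ in $PSL_2(\mathbb{C})$, a rotation by $2\pi$ being trivial there. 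Since the meridians $m_1,\dots,m_k$ normally generate $\ker\big(i_\ast\colon\pi_1(M-\Sigma)\to\pi_1(M)\big)$, this shows that $\rho_\infty$ descends to a representation $\bar\rho\in R(M)$ with $\rho_\infty=\bar\rho\circ i_\ast$.

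With $\rho_\infty$ factored through $\pi_1(M)$, I would apply Proposition \ref{egalite du vol de rep apres dehn filling} to the exterior of $\Sigma$ (a compact manifold with toral boundary whose meridian Dehn filling returns $M$ with cores $\Sigma$), obtaining
\[
Vol_{M-\Sigma}(\rho_\infty)=Vol_{M-\Sigma}(\bar\rho\circ i_\ast)=Vol_M(\bar\rho).
\]
Now I would invoke the Gromov-Thurston estimate $|Vol_M(\bar\rho)|\le v_3\,\|M\|$ bounding the volume of a representation by the Gromov norm (in the form established in \cite{Fra} and \cite{Dun}, with $v_3$ the volume of the regular ideal tetrahedron). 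Since $M$ has zero simplicial volume, $\|M\|=0$ forces $Vol_M(\bar\rho)=0$ and hence $Vol_{M-\Sigma}(\rho_\infty)=0$. Feeding this into the preceding corollary yields that $M$ is Seifert fibered or a $Sol$ manifold.

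I expect the genuine difficulty to lie in the first step, namely in proving that $\rho_\infty$ actually annihilates the meridians rather than merely sending them to elements of trace $\pm 2$. Indeed, a sequence of elliptics with rotation angle tending to $2\pi$ can degenerate to a nontrivial parabolic if its axis escapes to infinity, so one must argue that this does not happen — for instance by using the convergence $\rho_i\to\rho_\infty$ together with the commuting loxodromic $\rho_i(l_j)$ to pin down the axis. Ruling out this parabolic degeneration is what makes the factorization through $\pi_1(M)$ legitimate, and it is the only place where the full strength of the algebraic convergence of the holonomies is really used.
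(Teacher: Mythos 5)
Your argument is correct and follows essentially the same route as the paper's: kill the meridians in the limit, factor $\rho_{\infty}$ through $\pi_{1}(M)$, use $\left\Vert M\right\Vert =0$ together with the volume bound from \cite{Fra} to get $Vol_{M}=0$, transfer back via Proposition \ref{egalite du vol de rep apres dehn filling}, and invoke the preceding corollary. If anything, you are more careful than the paper at the one delicate point, namely ruling out a nontrivial parabolic limit for the meridian holonomy (the paper simply asserts $\rho_{\infty}(\mu)=1$), so no changes are needed.
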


\begin{proof}
Since the cone angles converge to $2\pi$, we have%
\[
\rho_{\infty}(\left[  \mu\right]  _{\pi_{1}\left(  M-\Sigma\right)
})=1_{PSL_{2}(\mathbb{C)}}\text{ ,}%
\]
for all meridian $%
\mu
$ of $\Sigma$. Then the representation $\rho_{\infty}\in R\left(
M-\Sigma\right)  $ factorizes to a representation $\psi_{\infty}\in R\left(
M\right)  $. More specifically%
\[
\rho_{\infty}=\psi_{\infty}\circ i_{\ast}\text{,}%
\]
where $i_{\ast}$ is the map induced by the inclusion $i:M-\Sigma
\hookrightarrow M$. Since the simplicial volume of $M$ is zero, we have (see
\cite{Fra})
\[
Vol_{M}\left(  \psi_{\infty}\right)  =0
\]
and then (Proposition \ref{egalite du vol de rep apres dehn filling})%
\[
Vol_{M-\Sigma}\left(  \rho_{\infty}\right)  =Vol_{M-\Sigma}\left(
\psi_{\infty}\circ i_{\ast}\right)  =Vol_{M}\left(  \psi_{\infty}\right)
=0\text{.}%
\]
The assertion follows from the preceding corollary.\hfill
\end{proof}

\bibliographystyle{amsplain}

\end{document}